\documentclass[12pt,a4paper]{amsart}
\usepackage{amsmath}
\usepackage{amssymb,amsthm,amscd}

\usepackage{fullpage}

\usepackage{enumerate}

\usepackage[
  backend=biber,
  style=alphabetic,
  sorting=nyt,
  giveninits=true,
  maxbibnames=99
]{biblatex}
\AtBeginBibliography{\sloppy}
\AtEveryBibitem{\clearfield{month}\clearfield{day}}
\DeclareNameAlias{sortname}{given-family}
\DeclareDelimFormat[bib]{multinamedelim}{\addcomma\space}
\DeclareDelimFormat[bib]{finalnamedelim}{\addcomma\space}
\DeclareDelimFormat[bib]{nametitledelim}{\addcomma\space}
\DeclareFieldFormat[article]{title}{\mkbibemph{#1}}
\DeclareFieldFormat{journaltitle}{#1}
\DeclareFieldFormat[article]{volume}{\mkbibbold{#1}}
\DeclareFieldFormat[article]{pages}{#1}
\DeclareFieldFormat{doi}{\url{https://doi.org/#1}}
\DeclareFieldFormat{url}{\url{#1}}

\renewbibmacro{in:}{}
\renewbibmacro*{journal+issuetitle}{%
  \usebibmacro{journal}%
  \setunit*{\addspace}%
  \printfield{volume}%
  \setunit{\addspace}%
  \printtext[parens]{\printfield{year}}%
  \iffieldundef{number}
    {}
    {\setunit{\addcomma\space}%
     \printtext{no.\addnbspace}%
     \printfield{number}}%
  \setunit{\addcomma\space}%
  \printfield{eid}%
  \newunit}

\usepackage[unicode]{hyperref}

\renewcommand{\k}{\Bbbk}
\newcommand{\ZZ}{\mathbb Z}

\newcommand{\Ga}{\mathbb G_{\mathrm a}}
\newcommand{\Gm}{\mathbb G_{\mathrm m}}

\newcommand{\Aut}{\operatorname{Aut}}
\newcommand{\Der}{\operatorname{Der}}
\newcommand{\LND}{\operatorname{LND}}
\newcommand{\Lie}{\operatorname{Lie}}

\newcommand{\Frac}{\operatorname{Frac}}

\newcommand{\ord}{\operatorname{ord}}
\newcommand{\gr}{\operatorname{gr}}
\newcommand{\Ad}{\operatorname{Ad}}
\newcommand{\id}{\operatorname{id}}
\newcommand{\SAut}{\operatorname{SAut}}
\newcommand{\AAut}{\operatorname{AAut}}

\newcommand{\cO}{\mathcal O}

\newcommand{\widebar}[1]{\overline{#1}}

\theoremstyle{plain}
\newtheorem{theorem}{Theorem}[section]
\newtheorem{lemma}[theorem]{Lemma}
\newtheorem{proposition}[theorem]{Proposition}

\newtheorem{conjecture}[theorem]{Conjecture}

\theoremstyle{definition}
\newtheorem{definition}[theorem]{Definition}
\newtheorem{setup}[theorem]{Setup}

\theoremstyle{remark}
\newtheorem{remark}[theorem]{Remark}

\author{Alexander Perepechko}

\address{
HSE University, Faculty of Computer Science\\
Pokrovsky blvd. 11, Moscow, 109028
Russia
}

\email{a@perep.ru}

\title[Automorphism groups of (semi)rigid affine varieties]{Neutral components of automorphism groups of (semi)rigid affine varieties}
\keywords{Affine variety, automorphism group, ind-group, nested group,
locally nilpotent derivation}
\subjclass[2020]{Primary 14R20; Secondary 14L30, 22E65}
\thanks{The research was supported by the grant RSF 25-11-00302.}

\begin{document}

\begin{abstract}
We say that an affine variety $X$ is (semi)rigid if all nontrivial actions of the additive group on $X$ have the same invariant ring.
Then all such actions comprise the abelian subgroup denoted $\mathrm{SAut}(X)$.

We prove that $X$ is (semi)rigid precisely when the neutral component $\Aut^\circ(X)$ of its automorphism group is nested.
In this case, the neutral component is the semidirect product of any maximal algebraic torus and $\mathrm{SAut}(X)$.
The proof uses Laurent expansions of algebraic curves in $\Aut^\circ(X)$.
\end{abstract}

\maketitle

\section{Introduction}

Throughout the paper, $\k$ is an algebraically closed field of characteristic zero and $X$ is an integral affine $\k$-variety.
The neutral component of the automorphism group of a projective $\k$-variety is an algebraic group \cite{Mat58}, but affine automorphism groups can be infinite-dimensional: already $\Aut(\mathbb A^2)$ contains all triangular automorphisms $(x,y)\mapsto(x,y+f(x))$, where $f\in\k[x]$.
Nevertheless, $\Aut(X)$ has a natural structure of an affine ind-group \cite[Theorem~5.1.1]{FuKr18-pr}.
This leads to a basic structural problem: under what conditions is $\Aut^\circ(X)$ an algebraic group, or more generally a nested ind-group, that is, an increasing union of algebraic subgroups compatible with its ind-structure?

Our survey with Kovalenko and Zaidenberg of automorphism groups of normal affine surfaces \cite{KPZ17} led us to the following conjecture in arbitrary dimension; see \cite[Conjectures~1.1 and~1.5]{PeRe23} for precise attribution.
An element of $\Aut(X)$ is called \emph{algebraic} if it belongs to an algebraic subgroup.
We write $\Ga$ and $\Gm$ for the additive and multiplicative groups, respectively; $\SAut(X)$ denotes the subgroup generated by all $\Ga$-subgroups of $\Aut(X)$, and $\AAut(X)$ the subgroup generated by all its $\Ga$- and $\Gm$-subgroups.

\begin{conjecture}[Neutral component conjecture]\label{conj:neutral-component}
The following conditions are equivalent:
\begin{enumerate}[(i)]
  \item $\SAut(X)$ is abelian;
  \item every element of $\Aut^\circ(X)$ is algebraic;
  \item $\Aut^\circ(X)$ is nested.
\end{enumerate}
If they hold, then
\[
  \Aut^\circ(X)=T\ltimes\SAut(X)
\]
for every maximal algebraic torus $T\subseteq\Aut(X)$, and $\SAut(X)$ consists of all unipotent elements of $\Aut^\circ(X)$.
\end{conjecture}

A related, more narrowly focused conjecture suggested by Hanspeter Kraft asserts that $\Aut(X)$ is discrete whenever $X$ admits neither a nontrivial $\Ga$-action nor a nontrivial $\Gm$-action.

Several cases of the neutral component conjecture were known.
We proved it for normal affine surfaces in \cite[Theorem~1.0.3(a)]{PeZa26} and extended it to all affine surfaces in \cite[Theorem~1.4]{BePe26-pr}.
Arzhantsev and Gaifullin proved that, for rigid $X$, $\Aut(X)$ is a finite extension of its maximal torus when the corresponding grading of $\cO(X)$ is pointed, meaning that its weight cone contains no line and its degree-zero part is the base field \cite[Proposition~3.2]{ArGa17}.
Under natural conditions on normal rational affine varieties with complexity-one torus actions, Borovik and Gaifullin proved the rigid case \cite[Theorem~6.4]{BoGa25}, while Rassolov proved the semirigid case \cite{Ras26-inprep}.

In \cite[Theorem~1.3]{PeRe23}, we proved that $\AAut(X)$ is nested if and only if $\SAut(X)$ is abelian.
In this case,
\[
  \AAut(X)=T\ltimes\SAut(X)
\]
for every maximal algebraic torus $T\subseteq\Aut(X)$.
The later paper \cite[Theorem~1.1]{PeRe24} shows that $\Aut^\circ(X)$ is nested if and only if all its elements are algebraic.
Thus it remains to prove that the commutativity of $\SAut(X)$ forces every element of $\Aut^\circ(X)$ to be algebraic.

We call $X$ \emph{rigid} if it admits no nontrivial $\Ga$-action, and \emph{semirigid} if it admits such an action and all such actions have the same invariant ring.
The group $\SAut(X)$ is abelian exactly in these two cases.
The following two theorems settle the conjectures above.

\begin{theorem}[Rigid case]\label{thm:rigid}
If $X$ is rigid, then $\Aut^\circ(X)$ is an algebraic torus.
\end{theorem}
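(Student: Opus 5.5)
By \cite[Theorem~1.1]{PeRe24}, $\Aut^\circ(X)$ is nested as soon as all its elements are algebraic. If it is nested, it is an increasing union of connected algebraic groups $G_1\subseteq G_2\subseteq\cdots$. Each $G_i$ has trivial unipotent radical, since otherwise it would contain a $\Ga$-subgroup, contradicting rigidity. By the same token each $G_i$ contains no root subgroups, so it is a torus. An increasing union of tori acting faithfully on $X$ stabilizes at a maximal torus, because the dimension of a faithfully acting torus is bounded by $\dim X$. Hence the whole task reduces to showing that every $g\in\Aut^\circ(X)$ is algebraic, and in fact lies in a fixed maximal torus $T$; \cite[Theorem~1.3]{PeRe23} gives $\AAut(X)=T$.

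To show this, I take $g\in\Aut^\circ(X)$ and an irreducible algebraic curve $C\subseteq\Aut^\circ(X)$ through $\id$ and $g$. Connecting curves exist in connected ind-groups. I normalize $C$, pick a point $c_0$ with $\varphi(c_0)=\id$ and a local parameter $t$ there, and view the family as a $\k((t))$-point of $\Aut(X)$. The comorphism becomes a $\k((t))$-automorphism $\varphi_t^*$ of $\cO(X)\otimes\k((t))$. The key device is the Laurent expansion $\varphi_t^*=\sum_{k\ge k_0}t^k D_k$, where the $D_k$ are $\k$-linear maps of $\cO(X)$. More useful are leading terms at other points of the completed curve, especially at points at infinity, where the expansion has a pole. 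Take a point $c_\infty$ of the smooth completion where the family degenerates, and fix a filtration by degree with respect to generators of $\cO(X)$. The lowest-order coefficient $D_{k_0}$ is then a nonzero algebra homomorphism-like map, and the associated "leading" data yields a derivation. The standard mechanism is as follows. If $\varphi_t^*=\exp$-like with a pole, then the lowest-order term of $\varphi_t^*-\id$ in a suitable normalization is a derivation $\partial$ of $\cO(X)$, which is either locally nilpotent or semisimple (diagonalizable).

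The main step is therefore a degeneration lemma: if the family along $C$ is not contained in a torus, then after rescaling, the limiting behavior produces a nonzero locally nilpotent derivation of $\cO(X)$. Concretely, I decompose $\cO(X)=\bigoplus_{m}\cO(X)_m$ by the $T$-weights, with $T$ a maximal torus (possibly trivial), and write $\varphi_t^*$ in block form with respect to the grading. Elements of $T$ act diagonally, with monomial entries in $t$. Suppose some off-diagonal block has a nonzero entry. Choose a one-parameter subgroup $\lambda$ of $T$ together with a reparametrization, and consider the family $\lambda(t^N)\circ\varphi_t\circ\lambda(t^{-N})$. Then the block with extremal weight difference dominates. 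Its leading term $\delta$ is homogeneous of nonzero degree, and it satisfies the Leibniz rule because it is the leading term of a difference of algebra automorphisms. Since it shifts the degree in a pointed direction, $\delta$ is locally nilpotent, which contradicts rigidity. Hence $\varphi_t^*$ preserves each weight space. Now $\varphi_t$ commutes with $T$, and I run the same Laurent-leading-term argument on the degree-zero part $\cO(X)^T$. A pole there gives either a nontrivial $\Gm$-action commuting with $T$, contradicting maximality of $T$, or a locally nilpotent derivation. So $\varphi_t$ acts on each weight space by scalars $t^{a_m}u_m(t)$. Multiplicativity forces these to be characters of the weight lattice, so $g\in T$.

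The main obstacle is the construction of the leading-term derivation when the curve has genuinely nonmonomial coefficients. There the leading term can be an algebra endomorphism (the limit of automorphisms) rather than $\id+\text{derivation}$, and I must extract from it a nonzero homogeneous derivation. I would try the following first. Take the leading term $L$ of $\varphi_t^*$ at the pole. It is a graded algebra homomorphism $\cO(X)\to\cO(X)$ for the filtration induced by $t$-valuations. I would then compare the expansions of $\varphi_t^*$ and of $(\varphi_t^*)^{-1}$ to show that $L$ is invertible on the associated graded ring. I then pass to the first correction term, which is an $L$-derivation. Controlling the associated graded ring, so that it remains a domain of the same dimension carrying an effective torus action whose root-type derivations lift back to $\cO(X)$, is the technical core.
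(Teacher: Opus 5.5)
Your first paragraph is fine. It is in fact redundant: once every $g\in\Aut^\circ(X)$ lies in $T$, you are done without \cite{PeRe24}. The genuine gap is where you yourself locate the ``technical core'': you never actually extract a derivation from a pole.

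\textbf{No derivation is produced from a pole.} At a pole the family does not tend to the identity. So neither the leading term of $\varphi_t^*$ (only an endomorphism of an associated graded ring) nor that of $\lambda(t^N)\varphi_t\lambda(t^{-N})-\id$ has any reason to satisfy the Leibniz rule. Even a homogeneous derivation of nonzero degree need not be locally nilpotent unless you know it is locally finite. For example, $\k[x^{\pm1}]$ is rigid with $T=\Gm$, and $x^2\partial_x$ has degree $1$ but is not locally nilpotent.

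The missing idea is the logarithmic derivative. For the Laurent expansion $q=\hat\gamma_p$, the operator $\omega_q=-[\partial_z,q]\,q^{-1}$ is automatically a $K$-linear derivation. Hence the leading coefficient $D$ of its pole is automatically a $\k$-derivation of $\cO(X)$. Moreover, $z^m\nabla_q=q\,(z^m\partial_z)\,q^{-1}$ preserves $A_R$ and reduces to $D$ modulo $z$. So the behaviour of $z\partial_z$ (integer eigenvalues), resp.\ of $z^m\partial_z$ with $m>1$ (raising $z$-order), transfers to $D$: it is semisimple with integer eigenvalues, resp.\ locally nilpotent (Lemma~\ref{lem:pole}). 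Rigidity then excludes $m>1$.

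Your off-diagonal-block step is unnecessary. For rigid $X$ the maximal torus $T$ is unique and normal \cite{ArGa17}, so every curve through $\id$ centralizes it by Lemma~\ref{lem:connected-normalizer}.

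\textbf{The endgame also fails as written.} Simple poles are not contradictions, whichever ring you analyse them on.
\begin{enumerate}[(a)]
  \item On all of $\cO(X)$, simple poles genuinely occur for curves inside $T$. The closure of a one-parameter subgroup $\lambda(\Gm)\subseteq T$ has simple poles at $0$ and $\infty$ with pole derivation $\pm d\lambda(1)$, and the resulting $\Gm$-action lies in $T$.
  \item On $\cO(X)^T$, a derivation lives on $X/\!/T$ and need not lift to $X$. So it contradicts neither rigidity nor maximality of $T$.
\end{enumerate}
The paper instead shows that the residue lies in $\Lie T$, by \cite[Proposition~4.3]{PeRe23} and the absence of LNDs. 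It then cancels the residue by composing with $\lambda(z)$, which gives $\hat\gamma_p\in T(K_p)\,C_{R_p}(\cO(X),T)$ at every $p\in\widetilde C$ (Lemma~\ref{lem:toral-correction}).

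Your final claim, that $\varphi_t$ acts by scalars on each weight space, is also unjustified. Weight spaces are infinite-dimensional $\cO(X)^T$-modules, and the local toral factors $\lambda(z)$ vary from point to point. The paper globalizes as follows. For $v,w\in A_\chi$, these factors cancel in $\gamma_a(v)\otimes\gamma_a^{-1}(w)$. So this tensor has no pole anywhere on $\widetilde C$ and is constant by Lemma~\ref{lem:projective-coefficients}. This forces each $\gamma_a|_{A_\chi}$ to be a homothety (Lemma~\ref{lem:torus-valued-curve}).
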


\begin{theorem}[Semirigid case]\label{thm:main}
If $X$ is semirigid, then $\Aut^\circ(X)$ is nested, and for every maximal algebraic torus $T\subseteq\Aut(X)$,
\[
  \Aut^\circ(X)=\AAut(X)=T\ltimes\SAut(X).
\]
\end{theorem}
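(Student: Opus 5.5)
Given the reductions recalled in the introduction, the plan is to show that every element of $\Aut^\circ(X)$ is algebraic. Then \cite[Theorem~1.1]{PeRe24} gives nestedness. Once $\Aut^\circ(X)$ is nested it is generated by its algebraic subgroups. Each connected algebraic subgroup is generated by its $\Ga$- and $\Gm$-subgroups, so $\Aut^\circ(X)=\AAut(X)$. The decomposition $T\ltimes\SAut(X)$ is then exactly \cite[Theorem~1.3]{PeRe23}, using that $\SAut(X)$ is abelian in the semirigid case.

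\textbf{Step 1: grading and filtration.} Let $R$ be the common invariant ring of all $\Ga$-actions, equivalently the common kernel of all nonzero locally nilpotent derivations. Semirigidity gives $\LND(\cO(X))=\{\partial\}$ for a fixed generator $\partial$ with kernel $R$, up to multiplication by elements of $R$. Each $f\in\cO(X)$ has a $\partial$-degree, and the degree filtration $\cO(X)=\bigcup_n F_n$ with $F_n=\ker\partial^{n+1}$ is canonical. Conjugating by any automorphism permutes $\Ga$-subgroups, so every automorphism preserves $R$. I would next show that elements of $\Aut^\circ(X)$ preserve each $F_n$. Preserving the filtration is a closed condition on the ind-group and holds on a generating set. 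I expect to obtain it from the fact that $\Aut(X)$ normalizes $\SAut(X)=\{\exp(r\partial)\}$, so that each $\phi$ satisfies $\phi\partial\phi^{-1}=u_\phi\partial$ for a unit-type factor $u_\phi\in\Frac R$.

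\textbf{Step 2: reduction to $R$.} Restriction gives a homomorphism $\Aut^\circ(X)\to\Aut(\operatorname{Spec}R)$, or to automorphisms of the field $\Frac R$. The variety $Y=\operatorname{Spec} R$ (or a suitable model of it) should be rigid. A $\Ga$-action on it would lift, via a local slice construction, to a $\Ga$-action on $X$ with a different kernel. By Theorem~\ref{thm:rigid} applied to $Y$, the image of $\Aut^\circ(X)$ is therefore contained in a torus. The kernel consists of automorphisms fixing $R$ pointwise and preserving the filtration. For those I would show, using a local slice $s$ with $\partial s=a\in R$, that $\cO(X)[a^{-1}]=R[a^{-1}][s]$. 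On this ring such an automorphism acts as $s\mapsto \lambda s+b$ with $\lambda\in R[a^{-1}]^\times$ and $b\in R[a^{-1}]$. Hence $\phi\exp(b'\partial)$ lies in a torus times $\SAut(X)$, and $\phi$ is algebraic.

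\textbf{Main obstacle.} The hard step is controlling the image in $\Aut(\operatorname{Spec}R)$ when $R$ is not finitely generated, or when $\operatorname{Spec}R$ does not inherit rigidity cleanly. Connectedness is also an issue: Theorem~\ref{thm:rigid} applies to neutral components, so I need the image of $\Aut^\circ(X)$ to land in the neutral component. For this I would use the abstract's hint and work with an algebraic curve $C\to\Aut^\circ(X)$ through $\id$. I would Laurent-expand $\phi_t(f)$ at a point at infinity of $C$, and track the leading terms with respect to the $\partial$-degree and the torus weights. The leading terms should define a $\Ga$- or $\Gm$-action, as in degeneration arguments. Semirigidity forces any such $\Ga$-part to be an $R$-multiple of $\partial$. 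This makes the $\phi_t$ bounded in a fixed algebraic subgroup $T\ltimes\exp(F\partial)$ with $F$ a finite-dimensional subspace of $R$. Showing that this bound is uniform along the curve is where I expect the real work.
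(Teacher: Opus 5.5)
\textbf{There is a genuine gap.} Your opening reduction is sound: algebraicity of all elements gives nestedness, and hence $\Aut^\circ(X)=\AAut(X)$. Step~1 is also fine; it is exactly the paper's observation that every automorphism preserves the canonical filtration.

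\textbf{Step~2 fails in two places.}

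\emph{$\operatorname{Spec}R$ need not be rigid.} Take the surface $x^2y=z^2-1$. Every LND of it has kernel $\k[x]$ (Makar-Limanov), so $X$ is semirigid, yet $\operatorname{Spec}R=\mathbb A^1$. The derivation $d/dx$ cannot lift to $\cO(X)$, since a lift would be an LND whose kernel is not $\k[x]$. The slice construction only produces derivations of $\cO(X)[a^{-1}]$. So Theorem~\ref{thm:rigid} is unavailable for $Y$. Here the image of $\Aut(X)$ happens to be toral, but only because automorphisms preserve the degenerate fibre $x=0$, which your argument does not capture. When $R$ is not finitely generated there is not even an ind-group $\Aut(\operatorname{Spec}R)$ to apply it to. And even with a toral image, you would still need to lift that torus to a torus of $\Aut(X)$.

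\emph{The kernel step never uses $\phi\in\Aut^\circ(X)$, and without it the conclusion is false.} Take $X=\Gm\times\mathbb A^1$ with $\cO(X)=\k[x^{\pm1},s]$, which is semirigid with $R=\k[x^{\pm1}]$. The automorphism $x\mapsto x$, $s\mapsto xs$ fixes $R$, preserves the filtration and has $\lambda=x$. But the elements $\phi^k(s)=x^ks$ are linearly independent, so $\phi$ is not locally finite and hence not algebraic. Excluding nonconstant $\lambda$ is precisely where connectedness must enter. Your last paragraph leaves that crux open: the leading terms of the expansion of $\phi_t$ itself do not define a $\Ga$- or $\Gm$-action in any evident way, and no mechanism for the uniform bound is given.

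\textbf{What the paper does instead.} It never touches $\operatorname{Spec}R$. It passes to $B=\gr A$ and uses the map $\rho\colon\Aut(A)\to\Aut_{\gr}(B)$. The kernel of $\rho$ is exactly $\SAut(X)$: an automorphism acting trivially on $B$ is unipotent, and its logarithm is an LND. Moreover $\rho(\AAut(X))=\widebar T$ is a faithful torus, central in $\rho(\Aut^\circ(X))$.

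To linearize a curve $C\ni\id$, the paper uses the logarithmic derivative $\omega_q=-q'q^{-1}$ of the Laurent expansion, which is a genuine derivation. Its pole derivation is semisimple with integer eigenvalues at a simple pole and locally nilpotent at a higher pole (Lemma~\ref{lem:pole}). On $B$ the pole derivation is $\gr(D_{-m})$ with $D_{-m}$ locally finite, so it equals $\gr(\delta_s)\in\Lie\widebar T$ by the decomposition $D_{-m}=\delta_s+\delta_n$ into a toral part and an LND. Hence the pole is simple and is cancelled by $\lambda(z)\in\widebar T(K_p)$.

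The no-pole principle, applied to $\gamma_a(v)\otimes\gamma_a^{-1}(w)$, then globalizes these local factorizations to $\rho(C)\subseteq\widebar T$. Thus $C\subseteq\rho^{-1}(\widebar T)=\AAut(X)$. Passing to $B$ is what kills the $\SAut$-part, so no separate treatment of the kernel or of $\operatorname{Spec}R$ is needed.
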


Related results concern subgroups of $\Aut(X)$.
Kraft and Zaidenberg proved that every solvable algebraically generated subgroup is nested and is a semidirect product of a nested unipotent group and an algebraic torus \cite[Theorem~B]{KrZa24}; for a broader account, see \cite{KrZa25}.
Algebraic families generating commutative and, more recently, solvable subgroups are treated in \cite[Theorem~B]{CRX23} and \cite[Theorem~A]{CKRS26-pr}, respectively.
For the structure of connected nested subgroups and maximal commutative unipotent subgroups, see \cite{Per23-pr} and \cite[Theorem~A]{ReSa25}.

We now outline the proof.
Every element of $\Aut^\circ(X)$ lies on an irreducible curve containing the identity.
At each point of its smooth projective model, we expand the family in a local parameter as in Setup~\ref{setup:laurent-expansion}.
Motivated by gauge transformations, we differentiate the family coefficientwise and compose with its inverse to obtain a derivation; see Setup~\ref{setup:connection}.
By Lemma~\ref{lem:pole}, the obtained derivation is semisimple with integer eigenvalues at a simple pole and locally nilpotent at a pole of order greater than one.
When the curve centralizes a torus and this derivation lies in its Lie algebra, Lemma~\ref{lem:toral-correction} cancels the pole by suitable elements of the torus.
We apply this to $\cO(X)$ in the rigid case and to the associated graded algebra of the canonical filtration induced by locally nilpotent derivations in the semirigid case, where $\SAut(X)$ acts trivially.
Lemma~\ref{lem:torus-valued-curve} then globalizes these local factorizations, showing that the curve lies in $T$ in the rigid case.
In the semirigid case, Proposition~\ref{prop:graded-quotient} lifts the corresponding statement from the group quotient in the associated graded algebra to give that the curve lies in $T\ltimes\SAut(X)=\AAut(X)$.

\subsection*{Declaration of AI assistance.}
The author used OpenAI's GPT-5.6 Sol model
to generate initial drafts of the proofs of Theorems~\ref{thm:rigid} and~\ref{thm:main} in connection- and gauge-theoretic terms.
The author subsequently reconstructed these arguments in algebraic terms and rewrote the proofs for clarity.
The gauge-theoretic setup retained in the final version is introduced in Setup~\ref{setup:connection}.
The model was also used to assist with proofreading and stylistic editing, including suggestions for clearer wording and exposition.
The author independently checked all AI-assisted arguments and takes full responsibility for the correctness and originality of the manuscript.

\section{Preliminaries}\label{sec:preliminaries}

\subsection{Ind-varieties}

We briefly recall the ind-variety structure; see, e.g.,~\cite{FuKr18-pr} for details.
An ind-variety is a filtered union $V=\bigcup_iV_i$ of algebraic varieties,
where each inclusion $V_i\hookrightarrow V_{i+1}$ is a closed immersion.
A map between ind-varieties is a morphism if its restriction to every filtration element is a morphism into some filtration element of the target.
Two filtrations are \emph{equivalent} if the identity map is an isomorphism of the resulting ind-varieties.
An \emph{ind-group} is group with an ind-variety structure such that multiplication and inversion are morphisms.

Let $X$ be an affine variety and fix a closed embedding $X\hookrightarrow \mathbb A^N$.
Bounding the degrees of an automorphism and its inverse gives the standard affine ind-group structure on $\Aut(X)$; it does not depend on the embedding up to equivalence.
This structure is characterized by the following universal property:
a map $H\to\Aut(X)$ from an algebraic variety is a morphism exactly when the corresponding action map $H\times X\to X$ is a morphism \cite[Theorem~5.1.1]{FuKr18-pr}.
An ind-group is \emph{nested} if it admits an equivalent filtration by algebraic subgroups.

Every connected affine ind-group is curve-connected.
In particular, every $g\in\Aut^\circ(X)$ lies on an irreducible algebraic curve in $\Aut^\circ(X)$ that contains the identity \cite[Proposition~2.2.1 and Remark~2.2.3]{FuKr18-pr}.

\subsection{Automorphism subgroups}\label{subsec:automorphism-subgroups}

Throughout this article, $\k$ is an algebraically closed field of characteristic zero and $X$ is an integral affine $\k$-variety.
We write $\cO(X)$ for its ring of regular functions, identify $\Aut(X)$ with $\Aut_{\k\text{-alg}}(\cO(X))$, and denote by $\Aut^\circ(X)$ its neutral component, i.e., the connected component of the identity in the ind-topology.
We also denote by $\SAut(X)$ the subgroup generated by all $\Ga$-subgroups of $\Aut(X)$,
and by $\AAut(X)$ the subgroup generated by all its $\Ga$- and $\Gm$-subgroups. Both subgroups are normal in $\Aut(X)$.

For background on locally nilpotent derivations (LNDs for short), including their correspondence with $\Ga$-actions via exponentiation, see \cite{Fre17}.
Two LNDs are \emph{equivalent} if their kernels in $\cO(X)$ coincide.
The variety $X$ is \emph{semirigid} if it admits a nonzero LND and all its nonzero LNDs are equivalent; it is \emph{rigid} if it admits no nonzero LND.
Our convention, in which rigidity and semirigidity are disjoint, agrees with \cite[p.~1]{Pet16} and \cite[p.~3]{DaGu21}; the convention of \cite[Definition~1.3]{Dai20} includes rigid varieties among semirigid ones.
We say that $X$ is \emph{(semi)rigid} when either alternative holds.
Equivalently, $X$ is (semi)rigid if and only if $\SAut(X)$ is abelian; this group is nontrivial in the semirigid case and trivial in the rigid case.

For Theorems~\ref{thm:rigid} and~\ref{thm:main}, we assume that $X$ is (semi)rigid.
Under this assumption, $\AAut(X)=T\ltimes\SAut(X)$ is nested for every maximal algebraic torus $T\subseteq\Aut(X)$ by \cite[Theorem~1.3]{PeRe23}.
By \cite[Proposition~4.3]{PeRe23}, every locally finite derivation of $\cO(X)$ is a sum of an element of $\Lie(T)$ and an LND.

\begin{lemma}\label{lem:connected-normalizer}
Let $T\subseteq\Aut(X)$ be an algebraic torus.
Any curve-connected subgroup of the normalizer of $T$ centralizes it.
\end{lemma}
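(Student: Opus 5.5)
The plan is to use rigidity of tori: the automorphism group of an algebraic torus is discrete. Let $H$ be a curve-connected subgroup of the normalizer $N(T)$, and fix $h\in H$. By curve-connectedness, $h$ lies on an irreducible algebraic curve $C\subseteq H$ through the identity; more precisely, there is a morphism $\varphi\colon C\to\Aut(X)$ from an irreducible curve with $\varphi(c_0)=\id$ and $\varphi(c_1)=h$, whose image lies in $N(T)$. It therefore suffices to show that every $g=\varphi(c)$ commutes with every $t\in T$.

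To do this, consider the eigenspace decomposition $\cO(X)=\bigoplus_{\chi\in M}\cO(X)_\chi$ with respect to $T$, where $M$ is the character lattice. Since $g$ normalizes $T$, conjugation $t\mapsto gtg^{-1}$ is an algebraic automorphism of $T$. Hence it acts on $M$ by some $\alpha_g\in\mathrm{GL}(M)$, and $g$ maps $\cO(X)_\chi$ onto $\cO(X)_{\alpha_g(\chi)}$, with the appropriate sign and inverse convention. Only characters in the weight monoid $S=\{\chi : \cO(X)_\chi\neq0\}$ are relevant, and $S$ generates $M$ when $T$ acts faithfully, which we may assume since $T\subseteq\Aut(X)$. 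Thus $g$ is determined on $T$ by the permutation $\alpha_g$ of $S$.

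Next comes the continuity step. Fix a nonzero homogeneous $f\in\cO(X)_\chi$. The map $c\mapsto\varphi(c)^*f$ is a morphism $C\to\cO(X)_{\le d}$ into a finite-dimensional subspace, by the universal property of the ind-structure recalled above. For each $c$, the image lies in the single eigenspace $\cO(X)_{\alpha_{\varphi(c)}(\chi)}$. The finite-dimensional space $\cO(X)_{\le d}$ meets only finitely many eigenspaces, which is where the $T$-stable filtration enters. Since $T$ is reductive, I would choose the filtration by $T$-stable finite-dimensional subspaces $V_d$ spanned by homogeneous elements, and use $\varphi(C)\cdot V\subseteq V'$ for some finite-dimensional $T$-stable $V'$ by algebraicity of the family. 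So $C$ is the union of the finitely many closed subsets $\{c:\varphi(c)^*f\in \cO(X)_\psi\cap V'\}$, and these are disjoint apart from the point where the image would be zero, which does not occur because each $\varphi(c)$ is an automorphism. By irreducibility, $\alpha_{\varphi(c)}(\chi)$ is constant, equal to $\alpha_{\id}(\chi)=\chi$. Doing this for generators of $M$ gives $\alpha_g=\id$ for all $g$, so $g$ preserves each eigenspace. It follows that $g$ commutes with every $t\in T$, because $t$ acts on $\cO(X)_\chi$ by the scalar $\chi(t)$.

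The main obstacle I expect is the finiteness and closedness claim: one must make sure that the image $\varphi(C)^*f$ lands in a finite-dimensional $T$-stable subspace, and that the loci where it lies in a given eigenspace are closed and disjoint. Closedness follows since the $\cO(X)_\psi\cap V'$ are linear subspaces. Disjointness follows since they are independent and $\varphi(c)^*f\neq0$. Once these hold, connectedness of $C$ forces the locus to be a single one.
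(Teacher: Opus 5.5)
Your proof is correct and follows the same idea as the paper. Conjugation by points of a curve through the identity gives a family of automorphisms of $T$ (your $\alpha_{\varphi(c)}$), and this family must be constant because $\Aut_{\rm grp}(T)$ is discrete; you verify that constancy by hand, covering $C$ by finitely many closed loci according to which weight space contains $\varphi(c)^*f$, where the paper simply cites discreteness. The detour through a $T$-stable filtration is unnecessary: any finite-dimensional subspace of $\cO(X)$ meets only finitely many weight spaces nontrivially, because vectors of distinct weights are linearly independent.
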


\begin{proof}
  Recall that the automorphism group $\Aut_{\rm grp}(T)$ of $T$ as an algebraic group is discrete.
Consider a curve $C$ in the normalizer of $T$.
The action of $C$ on $T$ by conjugations gives an algebraic family in $\Aut_{\rm grp}(T)$.
Hence the family is constant and the assertion follows.
\end{proof}

\section{The coefficientwise pole lemma}\label{sec:pole}
\begin{setup}[Coefficientwise connection]\label{setup:connection}
Let $A$ be an integral $\k$-algebra, and set
\[
  R:=\k[[z]],\qquad K:=\k((z)),\qquad
  A_R:=A\otimes_\k R,\qquad A_K:=A\otimes_\k K.
\]
Write $\partial_z=d/dz$ for coefficientwise differentiation on $A_K$.
Provided $q\in\Aut_K(A_K)$, we consider the following $\k$-linear operators on $A_K$:

  \begin{align}
    q'&:=[\partial_z,q]
    =\partial_z\circ q-q\circ\partial_z, \label{eq:commutator}\\
    \omega_q&:=-q'\circ q^{-1}, \label{eq:velocity}\\
    \nabla_q&:=\partial_z+\omega_q
    =q\circ\partial_z\circ q^{-1}. \label{eq:connection}
  \end{align}

 The operator $q'$ is $K$-linear and has the following properties:
  \begin{align*}
    q'(xy)=q'(x)q(y)+q(x)q'(y) \text { for } x,y\in A_K;\\
    (hq)'=h'\circ q+h\circ q'  \text { for } h,q\in\Aut_K(A_K).
  \end{align*}

  The operator $\omega_q$ is a $K$-linear derivation of $A_K$.  It satisfies
  \begin{align}\label{eq:velocity-identities}
    \omega_{hq}=\omega_h+\Ad_h(\omega_q) \text{ for } h\in\Aut_K(A_K).
  \end{align}

  The map $\nabla_q$ is a $\k$-linear derivation of $A_K$ extending
  $\partial_z$.  In particular, for $f\in K$ and $x,y\in A_K$,
\begin{align}
      \nabla_q(fx)=\partial_z(f) x+f\nabla_q(x),\\
      \nabla_q(xy)=\nabla_q(x)y+x\nabla_q(y).
\end{align}
Its kernel is $\ker_\k\nabla_q=q(A)$.
\end{setup}

\begin{remark}
These are standard constructions for connections and gauge transformations; see \cite[Chapter~3]{Ble81} and \cite[Chapter~II, Sections~1 and~3]{Del70}.
Indeed, under the convention $q\mathbin{\cdot}\nabla=q\nabla q^{-1}$, the operator $\nabla_q=q\partial_zq^{-1}$ is the gauge transform by $q$ of the trivial connection $\partial_z$.
In the fixed trivialization, its connection form is $\omega_q=-q'q^{-1}$, the negative right logarithmic derivative of $q$.
\end{remark}

\begin{definition}
For $0\neq x\in A_K$, its $z$-adic order is
\[
  \ord_z(x):=\max\{r\in\mathbb Z\mid x\in z^rA_R\}.
\]
An element $x\in A_R$ is \emph{primitive} if $\ord_z(x)=0$.
Suppose that $q\in\Aut_K(A_K)$ and
\[
  \omega_q(A_R)\subseteq z^{-M}A_R
\]
for some $M\geq0$.
The least such $M$ is the \emph{pole order} $m$ of $\omega_q$; it is non-negative by definition.
If $m>0$, the \emph{pole derivation} of $\omega_q$ is the derivation
\[
  D\colon A\longrightarrow A,\qquad
  D(a):=\widebar{z^m\omega_q(a)}.
\]
Here and below, the bar denotes reduction modulo $z$.
\end{definition}

\begin{lemma}[Coefficientwise pole lemma]\label{lem:pole}
Suppose that $q\in\Aut_K(A_K)$ and that $\omega_q$ has finite pole order $m$.
\begin{enumerate}[(i)]
  \item\label{item:pole-regular} If $m=0$, then $q(A_R)=A_R$.
  \item\label{item:pole-nonzero} If $m>0$, then the pole derivation $D$ of $\omega_q$ is nonzero.
  \item\label{item:pole-simple} If $m=1$, then $D$ is semisimple with integral eigenvalues.
  \item\label{item:pole-higher} If $m>1$, then $D$ is locally nilpotent.
\end{enumerate}
In particular, $D$ is locally finite if $m>0$.
\end{lemma}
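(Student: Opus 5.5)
The plan is to exploit the identity $\ker_\k\nabla_q=q(A)$ from Setup~\ref{setup:connection} together with $K$-linearity of $\omega_q$, by comparing $z$-adic leading terms in the equation $\partial_z x=-\omega_q(x)$ for $x\in q(A)$. Every element of $A_R=A\otimes_\k R$ is a finite sum $\sum a_i\otimes f_i$, and $\omega_q$ is $K$-linear. Hence the condition $\omega_q(A_R)\subseteq z^{-M}A_R$ only needs to be checked on $A$.

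Part (ii) is then immediate. By minimality of $m>0$ there is $a\in A$ with $z^{m}\omega_q(a)\notin zA_R$, that is, $D(a)\neq0$.

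For (i), assume $m=0$, so $\omega_q$ preserves $A_R$ and each $z^rA_R$.
\begin{itemize}
\item I first show $\ker\nabla_q\subseteq A_R$. Take $x\in\ker\nabla_q$ with $\ord_z x=r<0$ and leading coefficient $x_0\in A$. The coefficient of $z^{r-1}$ in $\partial_z x$ is $rx_0$, while $\omega_q(x)\in z^rA_R$. So $rx_0=0$, a contradiction.
\item Next I solve $\nabla_q y=0$ in $A_R$ with prescribed reduction $\bar y=a$. The recursion $(n+1)y_{n+1}=-(\omega_q y)_n$ determines the coefficients uniquely. The subtle point is that one must check the solution lies in $A\otimes R$ rather than only in $A[[z]]$. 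I would handle this by comparison with $q(A)$: since $q(A)=\ker\nabla_q\subseteq A_R$, uniqueness of solutions shows that reduction mod $z$ is an injective $\k$-linear map $q(A)\to A$.
\item For surjectivity I apply the same reasoning to $q^{-1}$, using $\omega_{q^{-1}}=-\Ad_{q^{-1}}(\omega_q)$ from \eqref{eq:velocity-identities}. Equivalently, writing $a=\sum_j z^j q(b_j)$, where $q^{-1}(a)=\sum_j z^j b_j$, shows $q^{-1}(A)\subseteq A_R$.
\item Together these give $q(A_R)\subseteq A_R$ and $q^{-1}(A_R)\subseteq A_R$, hence $q(A_R)=A_R$.
\end{itemize}

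For (iii) and (iv), set $\Theta:=z^m\nabla_q=z^m\partial_z+z^m\omega_q$. By the definition of $m$, $\Theta$ preserves every $z^rA_R$, and $\ker_\k\Theta\supseteq q(A)$. On the graded piece $z^rA_R/z^{r+1}A_R\cong A$, the operator $\Theta$ induces $r+D$ when $m=1$, and $D$ when $m>1$, because $z^m\partial_z$ raises the order by $m-1\ge1$. So for any $x\in q(A)$ with leading coefficient $x_0$, we get $D(x_0)=-rx_0$ when $m=1$ and $D(x_0)=0$ when $m>1$.

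To pass from leading coefficients to all of $A$, I would use a filtration argument. Take a finite-dimensional subspace $W\subseteq q(A)$ and choose a $\k$-basis of $W$ adapted to the $z$-adic filtration, so that its leading coefficients are linearly independent. Since $q(A)\otimes K=A_K$, the $\k$-span of all leading coefficients of elements of $q(A)$ should be all of $A$, graded by order. Each $a\in A$ then lies in a finite sum of generalized eigenspaces of $D$. Refining this with the exact operator $\Theta$ on $A_R$, rather than only its leading terms, should give that $D$ acts semisimply with integer eigenvalues $-r$ when $m=1$, and that $D$ is nilpotent on a finite-dimensional $D$-stable filtration when $m>1$.

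The main obstacle is exactly this last step: controlling cancellations of leading terms in $K$-linear combinations of elements of $q(A)$, so that "eigenvector (or kernel) on leading coefficients" upgrades to local finiteness and semisimplicity (or nilpotency) of $D$ on all of $A$. I would first try to prove that $\Theta$ is locally finite on $A_R/z^NA_R$ for large $N$, using $q(A)\subseteq\ker\Theta$ and the $R$-span of $q(A)$. I would then read off the Jordan structure of $D=\Theta$ mod $z$ (shifted by $r$) from the graded pieces. The final clause, that $D$ is locally finite when $m>0$, follows directly from (iii) and (iv).
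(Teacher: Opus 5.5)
Your treatment of (i) and (ii) is sound and matches the paper. Comparing $z^{r-1}$-coefficients in $\nabla_q(q(a))=0$ gives $\ord_z q(a)=0$. Hence reduction modulo $z$ is injective on $q(A)$, so $q$ preserves $z$-adic order, and your expansion $a=\sum_j z^jq(b_j)$ then yields $q^{-1}(A)\subseteq A_R$. Your other suggestion, applying the same reasoning to $q^{-1}$ via $\omega_{q^{-1}}=-\Ad_{q^{-1}}(\omega_q)$, would be circular: showing that this operator preserves $A_R$ already requires $q^{-1}(A_R)\subseteq A_R$.

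The genuine gap is in (iii) and (iv), which are the heart of the lemma. Your claim that the leading coefficients of elements of $q(A)$ span $A$ is false. By your own graded-piece computation, for $m>1$ all of them lie in $\ker D$, which is a proper subspace by (ii). For instance, take $A=\k[x,y]$, $q(x)=x$, $q(y)=y+z^{-1}x$. Then $\omega_q=z^{-2}x\partial_y$, so $m=2$ and $D=x\partial_y$. But for $f=\sum_{k\le K}f_k(x)y^k$ with $f_K\neq0$, the leading coefficient of $q(f)$ is $f_K(x)x^K\in\k[x]$. Passing to $K$-linear combinations does not help, since they leave $\ker_\k\nabla_q$ and their leading terms can cancel. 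Nor does the $R$-span, which equals $q(A_R)$ and is generally smaller than $A_R$: here $y\notin q(A_R)$ because $q^{-1}(y)=y-z^{-1}x$. So your argument only constrains $D$ on a subspace that can be far from $A$. The assertion that each $a\in A$ lies in finitely many generalized eigenspaces is therefore unsupported.

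The missing idea is to conjugate back instead of looking at leading terms, combined with a uniform bound on order loss. Since $q$ is $K$-linear, $\Theta=q\circ(z^m\partial_z)\circ q^{-1}$. Fix $0\neq a\in A$ and write $y:=q^{-1}(a)\in V\otimes_\k K$ for a finite-dimensional $V\subseteq A$. Pick $c\ge0$ with $q(V)\subseteq z^{-c}A_R$. Then $\ord_z q(x)\ge\ord_z x-c$ for all $0\neq x\in V\otimes_\k K$; this constant is exactly what controls the cancellations you were worried about.
\begin{itemize}
\item If $m=1$, put $s:=\ord_z y\le c$ and $F(T):=\prod_{j=s}^{c}(T-j)$. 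Since $z\partial_z$ acts on $z^jV$ by $j$, we get $F(z\partial_z)y\in z^{c+1}(V\otimes_\k R)$. Hence $F(\Theta)(a)=q(F(z\partial_z)y)\in zA_R$, and reduction modulo $z$ gives $F(D)(a)=0$. As $F$ has distinct integer roots, $D$ is semisimple with integral eigenvalues.
\item If $m>1$, then $(z^m\partial_z)^ny$ is zero or has order at least $s+n(m-1)$. So $\Theta^n(a)\in zA_R$ for $n\gg0$, i.e.\ $D^n(a)=0$.
\end{itemize}
Both steps use $\widebar{\Theta^n(a)}=D^n(a)$, which is your graded-piece observation in degree $r=0$.
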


\begin{proof}
Suppose first that $m=0$.
For $0\neq a\in A$, write $q(a)=z^ry$, where
  $y\in A_R$ is primitive.  Since $q(a)\in\ker_\k\nabla_q$ and
  $\omega_q$ is $R$-linear, we have
  \begin{align*}
    0=z^{1-r}\nabla_q(z^ry)
      &=r y+z\partial_z(y)+z\omega_q(y),\\
    0=\widebar{z^{1-r}\nabla_q(z^ry)}&=r\widebar y.
  \end{align*}
  Thus $r=0$, $q(A)\subseteq A_R$, and reduction modulo $z$ defines an injective algebra homomorphism $\widebar q\colon A\to A$.
  For a primitive $x\in A_R$, we have
  $\widebar{q(x)}=\widebar q(\widebar x)\neq0$, so $q$ preserves $z$-adic order on $A_K$.
  The same assertion holds for $q^{-1}$, so $q(A_R)=A_R$, proving~(\ref{item:pole-regular}).

  Now let $m>0$, and put
  \[
    P:=z^m\nabla_q=q\,(z^m\partial_z)\,q^{-1}.
  \]
  Both summands of $P=z^m\partial_z+z^m\omega_q$ do not decrease
  $z$-adic order, i.e., $P(z^rA_R)\subseteq z^rA_R$ for $r\geq0$.
   In particular, $P$ induces $D$ on $A_R/zA_R=A$, and
  \begin{equation}\label{eq:iterate-reduction}
    \widebar{P^n(a)}=D^n(a)\text{ for } a\in A,\ n\geq0.
  \end{equation}
  If $D=0$, then $z^m\omega_q(A)\subseteq zA_R$, which contradicts the minimality of $m$.
  Hence $D\neq0$, proving~(\ref{item:pole-nonzero}).

  For the remainder of the proof, fix $0\neq a\in A$ and write $y:=q^{-1}(a)=\sum_{i=1}^r v_i f_i(z)$, where $v_1,\ldots,v_r\in A$ are linearly independent, and $f_i(z)\in K$.
  Denote by $V$ the $\k$-subspace of $A$ spanned by $v_1,\ldots,v_r$.  There is an integer $c\geq0$ such that
  \begin{equation}\label{eq:bounded-order}
    \ord_z(q(x))\geq \ord_z(x)-c
    \text{ for } 0\neq x\in V\otimes_\k K.
  \end{equation}
  Indeed, take $c$ such that $q(v_i)\in z^{-c}A_R$ for every $i$.

  Suppose that $m=1$.
  Put $s=\ord_z(y)$.  Applying \eqref{eq:bounded-order} to $a=q(y)$
  gives $s\leq c$.  Since $(z\partial_z)(z^jv)=jz^jv$ for any $v\in V$, the polynomial
  \[
    F(T):=\prod_{j=s}^{c}(T-j)\in\ZZ[T]
  \]
  satisfies $F(z\partial_z)(y)\in z^{c+1}(V\otimes_\k R)$.  Therefore
  \[
    F(P)(a)=q\bigl(F(z\partial_z)(y)\bigr)\in zA_R
  \]
  by \eqref{eq:bounded-order}, and \eqref{eq:iterate-reduction} gives
  $F(D)(a)=0$. Hence
  \[
    A=\bigoplus_{r\in\mathbb Z}\ker_\k(D-r\id_A).
  \]
  This proves~(\ref{item:pole-simple}).

  Finally, suppose that $m>1$.
  The operator $\delta_m:=z^m\partial_z$ raises $z$-adic order by at
  least $m-1$.  Whenever $\delta_m^ny\neq0$, we have
  \[
    \ord_z(P^n(a))=\ord_z(q(\delta_m^ny))
    \geq \ord_z(y)+n(m-1)-c.
  \]
  For sufficiently large $n$, either $\delta_m^ny=0$ or the right-hand
  side is positive.  Thus $P^n(a)\in zA_R$, and
  \eqref{eq:iterate-reduction} gives $D^n(a)=0$.  Hence $D$ is locally
  nilpotent.  This proves~(\ref{item:pole-higher}) and completes the proof.
\end{proof}

\begin{setup}\label{setup:toral-integrality}
Let $T\subseteq\Aut(A)$ be an algebraic torus.
Let $C_K(A,T)$ be the subgroup of $\Aut_K(A_K)$ consisting of
the automorphisms that commute with every element of $T$, and let
$C_R(A,T)$ be the subgroup of its elements that preserve $A_R$.
Every member of $C_K(A,T)$ preserves the $T$-weight spaces and hence commutes with $T(K)$.
Consequently,
\[
  T(K)\,C_R(A,T)
  \subseteq C_K(A,T).
\]
This product is a subgroup; we call its members \emph{torally integral}.
For $q\in C_K(A,T)$, one has
\begin{equation}\label{eq:correction-membership}
  q\in T(K)\,C_R(A,T)
  \quad\Longleftrightarrow\quad
  bq\in\Aut_R(A_R)\text{ for some }b\in T(K).
\end{equation}
Indeed, $bq$ still centralizes $T$, and $q=b^{-1}(bq)$; the converse follows by reversing this factorization.
We use the same notation after replacing $A$ by another algebra or $(R,K)$ by $(R_p,K_p)$.
\end{setup}

\begin{lemma}\label{lem:toral-correction}
Let $q\in\Aut_K(A_K)$ centralize a faithful algebraic torus $T\subseteq\Aut(A)$.
Assume that $\omega_q$ has pole order $m\leq1$ and, if $m>0$, that the pole derivation of $\omega_q$ belongs to $\Lie T$.
Then
\[
  q\in T(K)\,C_R(A,T).
\]
\end{lemma}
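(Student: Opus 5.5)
If $m=0$, Lemma~\ref{lem:pole}(\ref{item:pole-regular}) gives $q(A_R)=A_R$. Since $q$ centralizes $T$, we get $q\in C_R(A,T)$, and the claim holds with $b=1$. So assume $m=1$, and let $D\in\Lie T$ be the pole derivation of $\omega_q$. By Lemma~\ref{lem:pole}(\ref{item:pole-simple}), $D$ is semisimple with integral eigenvalues. Since $T$ is faithful, $\Lie T$ is identified with $\mathfrak X_*(T)\otimes\k$, and $D$ acts on the weight space $A_\chi$ by the scalar $\langle\chi,D\rangle$. Integrality of these scalars on all weights occurring in $A$ is what I need to make $D$ a cocharacter. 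The weights occurring in $A$ generate the character lattice, because $T$ acts faithfully. Hence there is a cocharacter $\lambda\colon\Gm\to T$ with $d\lambda(1)=D$. Set $b:=\lambda(z)\in T(K)$, which acts on $A_\chi$ by $z^{\langle\chi,\lambda\rangle}$.

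Next I compute $\omega_b$ and pass to $bq$ or $b^{-1}q$, whichever sign cancels the pole. On $A_\chi$ we have $b'=\langle\chi,\lambda\rangle z^{-1}b$, so $\omega_b=-z^{-1}D$ as a $K$-linear derivation. By \eqref{eq:velocity-identities},
\[
  \omega_{bq}=\omega_b+\Ad_b(\omega_q).
\]
Since $q$ commutes with $T$, it preserves the weight decomposition of $A_K$. Then $\omega_q=-q'q^{-1}$ commutes with $T$ too, because $T$ commutes with $\partial_z$. Hence $\omega_q$ preserves each $A_\chi\otimes K$, and $\Ad_b(\omega_q)=\omega_q$. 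Therefore
\[
  \omega_{bq}=\omega_q-z^{-1}D,
\]
where $D$ is extended $K$-linearly. Now $z\omega_q(a)\equiv D(a)\pmod z$ for $a\in A$, so $z\omega_{bq}(A_R)\subseteq zA_R$, that is, $\omega_{bq}(A_R)\subseteq A_R$. Thus $\omega_{bq}$ has pole order $0$. This uses that $\omega_q$ and $D$ are $R$-linear on $A_R$ and that $A_R$ is the $z$-adic completion-free tensor $A\otimes R$. Elements of $A_R$ are finite sums $\sum a_i r_i$ with $a_i\in A$ and $r_i\in R$, so checking on $A$ suffices.

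Lemma~\ref{lem:pole}(\ref{item:pole-regular}) applied to $bq$ then gives $bq(A_R)=A_R$, so $bq\in\Aut_R(A_R)$. Since $bq$ centralizes $T$, the equivalence \eqref{eq:correction-membership} yields $q\in T(K)\,C_R(A,T)$.

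The main obstacle is the integrality step: producing a genuine cocharacter from $D$. The eigenvalues of $D$ are integers on every weight occurring in $A$. I need these weights to span the character lattice $\mathfrak X(T)$ over $\ZZ$, not merely over $\mathbb Q$. Faithfulness of $T$ on $A$ is exactly this condition, since the kernel of the action is the intersection of the kernels of the occurring characters. Under that condition the linear functional $\chi\mapsto\langle\chi,D\rangle$ is integer-valued on $\mathfrak X(T)$, and so it is a cocharacter. A secondary point is the sign convention: if $\omega_b$ comes out as $+z^{-1}D$, I will use $\lambda^{-1}$ instead.
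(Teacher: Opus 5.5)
Your proposal is correct and follows essentially the same route as the paper's proof: the regular case comes from Lemma~\ref{lem:pole}(\ref{item:pole-regular}); in the simple-pole case, $D$ lifts to a cocharacter $\lambda$ because the occurring weights generate $X^*(T)$, and correcting by $b=\lambda(z)$ (with $\omega_b=-z^{-1}D$ and $\Ad_b(\omega_q)=\omega_q$) removes the pole, after which \eqref{eq:correction-membership} finishes. You also justify the integrality step (faithfulness forces the occurring weights to generate the lattice over $\ZZ$), which the paper only cites.
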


\begin{proof}
If $\omega_q$ is regular, Lemma~\ref{lem:pole}(\ref{item:pole-regular}) gives $q\in C_R(A,T)\subseteq T(K)\,C_R(A,T)$.
Thus the assertion holds in this case.
Otherwise, by Lemma~\ref{lem:pole}(\ref{item:pole-simple}), the pole derivation $D$ of $\omega_q$ is semisimple with integral eigenvalues.
The weights of the faithful $T$-action on $A$ generate $X^*(T)$ \cite[Chapter~12]{Mil17}, so $D=d\lambda(1)$ for a cocharacter $\lambda\colon\Gm\to T$.
Put $b=\lambda(z)$.
For $a\in A_\chi$, put $n=\langle\chi,\lambda\rangle$.
Then $b(a)=z^na$, $D(a)=na$, and
\[
  \omega_b(a)=-b'b^{-1}(a)
  =-z^{-n}[\partial_z,b](a)
  =-z^{-n}\partial_z(z^n)a
  =-nz^{-1}a=-z^{-1}D(a).
\]
Thus $\omega_b=-z^{-1}D$.

Since both $q$ and $\partial_z$ commute with the action of $T$, they preserve its weight spaces.
Hence so does $\omega_q$, and therefore $\Ad_b(\omega_q)=\omega_q$.
The product formula \eqref{eq:velocity-identities} gives
\[
  \omega_{bq}=\omega_q-z^{-1}D,
\]
which is regular.
Lemma~\ref{lem:pole}(\ref{item:pole-regular}) gives $bq\in\Aut_R(A_R)$, and~\eqref{eq:correction-membership} gives the assertion.
\end{proof}

\begin{setup}[Laurent expansion]\label{setup:laurent-expansion}
Let $C$ be an irreducible curve and
$\gamma\colon C\to\Aut(A)$, $c\mapsto\gamma_c$, an algebraic family of
automorphisms.
For each $x\in A$, the subset $\gamma(C)(x)\subset A$ is contained in a finite-dimensional subspace of $A$.
Hence there are linearly independent $x_1,\ldots,x_s\in A$ and regular functions $f_1,\ldots,f_s$ on $C$ such that
\[
  \gamma_c(x)=\sum_{i=1}^s f_i(c)x_i.
\]
Let $\eta$ be the generic point of $C$.
Then the generic member of the family is defined by the same functions:
\[
 \gamma_\eta\in\Aut_{\k(C)}(A\otimes_\k\k(C)),
 \qquad \gamma_\eta(x)=\sum_{i=1}^s f_i x_i.
\]
It is an automorphism, since its inverse is defined by the inverse family $c\mapsto \gamma_c^{-1}$.

Let $\widetilde C$ be the smooth projective model of $C$ and fix $p\in\widetilde C$.
Write $R_p:=\widehat{\cO}_{\widetilde C,p}, \, K_p:=\Frac(R_p)$.
The canonical embedding $\k(C)=\k(\widetilde C)\hookrightarrow K_p$ induces, by scalar extension, an injective homomorphism
\[
 \Aut_{\k(C)}(A\otimes_\k\k(C)) \hookrightarrow \Aut_{K_p}(A\otimes_\k K_p).
\]
We denote by $\hat{\gamma}_p$ the image of $\gamma_\eta$ under this homomorphism.

Choose a local parameter $z$ at $p$.
Since $\widetilde C$ is smooth, $\cO_{\widetilde C,p}$ is a discrete valuation ring with uniformizer $z$.
The choice of $z$ gives identifications $ R_p\simeq\k[[z]],\,
K_p\simeq\k((z)). $ Under these identifications, the embedding $\iota_p\colon\k(C)\hookrightarrow K_p$ sends each rational function to its Laurent expansion in $z$ at $p$.
Hence, if $\gamma_\eta(x)=\sum_i f_i x_i$, then
\[
 \hat{\gamma}_p(x)=\sum_i \iota_p(f_i)x_i,
\]
and we call $\hat{\gamma}_p$ the \emph{Laurent expansion} of the family at $p$.
If $p$ lies above a point $c\in C$, then the family and its inverse are regular at $p$, so
\[
 \hat{\gamma}_p\in\Aut_{R_p}(A\otimes_\k R_p),
 \qquad \hat{\gamma}_p\bmod z=\gamma_c.
\]
\end{setup}

\begin{lemma}[No-pole principle]\label{lem:projective-coefficients}
Let $V$ be a $\k$-vector space and $\theta\in V\otimes_\k\k(C)$.
If, for every $p\in\widetilde C$, the image of $\theta$ in
$V\otimes_\k K_p$ belongs to $V\otimes_\k R_p$, then $\theta\in V$.
\end{lemma}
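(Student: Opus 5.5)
The plan is to reduce to the scalar case: write $\theta$ in a basis, and then use the fact that a rational function on a smooth projective curve with no poles is constant.

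Since $\theta$ is a finite sum of pure tensors, there are linearly independent vectors $v_1,\ldots,v_s\in V$ and rational functions $f_1,\ldots,f_s\in\k(C)$ such that
\[
  \theta=\sum_{i=1}^s v_i\otimes f_i .
\]
We may take the $v_i$ linearly independent by choosing a basis of the finite-dimensional subspace spanned by the $V$-components of $\theta$. Fix $p\in\widetilde C$. The image of $\theta$ in $V\otimes_\k K_p$ is $\sum_i v_i\otimes\iota_p(f_i)$.

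The key step is to show that $\iota_p(f_i)\in R_p$ for each $i$, i.e.\ that coefficients can be read off in a basis. Extend $v_1,\ldots,v_s$ to a basis of $V$, and let $\phi_i\colon V\to\k$ be the coordinate functionals, so that $\phi_i(v_j)=\delta_{ij}$. The $\k$-linear map $\phi_i\otimes\id\colon V\otimes_\k K_p\to K_p$ sends $V\otimes_\k R_p$ into $R_p$. It also sends the image of $\theta$ to $\iota_p(f_i)$. Hence $\iota_p(f_i)\in R_p$. Since $R_p\cap\k(\widetilde C)=\cO_{\widetilde C,p}$ (both are valuation conditions for the valuation at $p$), each $f_i$ is regular at $p$.

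As this holds for every $p\in\widetilde C$, each $f_i$ is a regular function on the smooth projective irreducible curve $\widetilde C$, so $f_i\in\k$ because $\k$ is algebraically closed. Therefore $\theta=\sum_i f_iv_i\in V$.

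No step is a real obstacle. The only point needing care is the coordinate-functional argument, which guarantees that integrality of the vector-valued expansion forces integrality of each coefficient even when $V$ is infinite-dimensional.
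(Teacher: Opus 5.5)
Your proof is correct and is the paper's argument: write $\theta$ in linearly independent $v_i$, use dual coordinate functionals to get $\iota_p(f_i)\in R_p$ at every $p\in\widetilde C$, and conclude that each $f_i$ is regular on the smooth projective curve $\widetilde C$, hence constant. Your added remarks, that $R_p\cap\k(\widetilde C)=\cO_{\widetilde C,p}$ and that the functionals work even when $V$ is infinite-dimensional, just spell out steps the paper leaves implicit.
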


\begin{proof}
Write $\theta=\sum_i f_i v_i$ with linearly independent $v_i\in V$ and
$f_i\in\k(C)$.
Applying coordinate functionals dual to the $v_i$ shows that
$f_i\in R_p$ for every $p\in\widetilde C$.
Thus $f_i$ has nonnegative valuation at every $p$, so it is regular on
$\widetilde C$ and hence constant by
\cite[Chapter~I, Theorem~3.4]{Har77}.
\end{proof}

We use this lemma to give a local criterion for a curve of automorphisms to lie in a torus.

\begin{lemma}\label{lem:torus-valued-curve}
Let an algebraic torus $T$ act faithfully and rationally on $A$.
Let $\gamma\colon C\to\Aut(A)$ be an algebraic family parametrized by an
irreducible curve, assume that $\gamma(C)$ centralizes $T$, and contains the identity.
If for every $p\in\widetilde C$ one has
\[
  \hat\gamma_p\in T(K_p)\,C_{R_p}(A,T),
\]
then $\gamma(C)\subseteq T$.
\end{lemma}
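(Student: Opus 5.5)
The plan is to turn the local factorizations into a single global one, $\gamma_\eta=b\,h$ with $b\in T(\k(C))$ and $h$ constant, and then use that $\gamma(C)$ contains the identity.

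\emph{Step 1: the local cocharacters are canonical.} Decompose $A=\bigoplus_\chi A_\chi$ into $T$-weight spaces. Since $\gamma(C)$ centralizes $T$, so does $\gamma_\eta$, and $\gamma_\eta$ preserves every $A_\chi\otimes\k(C)$. At $p\in\widetilde C$ write $\hat\gamma_p=b_p h_p$ with $b_p\in T(K_p)$ and $h_p\in C_{R_p}(A,T)$.

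We show that $b_p$ is well defined modulo $T(R_p)$. If $b_ph_p=b_p'h_p'$, then $b_p^{-1}b_p'$ preserves $A\otimes R_p$. It acts on $A_\chi$ by the scalar $\chi(b_p^{-1}b_p')$. Because the action is faithful, the weights $\chi$ with $A_\chi\ne0$ generate $X^*(T)$. Hence $\chi(b_p^{-1}b_p')\in R_p^\times$ for all $\chi$, so $b_p^{-1}b_p'\in T(R_p)$.

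Therefore $b_p$ determines a cocharacter $\lambda_p\in X_*(T)$ via valuations, with $\langle\chi,\lambda_p\rangle=\ord_p\chi(b_p)$. If $p$ lies over a point of $C$, then $\hat\gamma_p\in\Aut_{R_p}$ by Setup~\ref{setup:laurent-expansion}, so $\lambda_p=0$. Only the finitely many points of $\widetilde C\setminus C$ contribute, and we obtain a $X_*(T)$-valued divisor $\Lambda=\sum_p\lambda_p\,p$.

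\emph{Step 2: compute orders exactly.} Fix $0\ne a\in A_\chi$. Since $h_p$ preserves $A\otimes R_p$ and its reduction modulo $z$ is injective (this is the argument of Lemma~\ref{lem:pole}(\ref{item:pole-regular})), $h_p(a)$ is primitive. Hence
\[
  \ord_p\bigl(\gamma_\eta(a)\bigr)=\langle\chi,\lambda_p\rangle .
\]
Write $\gamma_\eta(a)=\sum_i f_ix_i$ with the $x_i\in A_\chi$ linearly independent. Then $\min_i\ord_p f_i=\langle\chi,\lambda_p\rangle$ at every point $p$, including the points over $C$, where the value is $0$.

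\emph{Step 3: principality, the main obstacle.} We need $b\in T(\k(C))$ with $\ord_p\chi(b)=\langle\chi,\lambda_p\rangle$ for all $\chi$ and $p$. For this, each divisor $D_\chi=\sum_p\langle\chi,\lambda_p\rangle p$ must be principal, for $\chi$ in a basis of $X^*(T)$. Step 2 says $D_\chi$ is the ``gcd divisor'' of the coefficient vector $(f_i)$, so $-D_\chi$ is the pullback of a hyperplane section under the map $\widetilde C\to\mathbb P(W)$ given by the $f_i$. This is not automatically principal.

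My first attempt will exploit the two ends of the family.
\begin{itemize}
  \item Degree: applying Step 2 to $\gamma_\eta^{-1}$, whose local factors are $h_p^{-1}b_p^{-1}$, gives $\deg D_\chi$ of both signs. Hence $\deg D_\chi=0$, and so the map to $\mathbb P(W)$ would be constant.
  \item Constancy: a constant map means all $f_i$ are proportional to a single function $g_\chi$. Then $\gamma_\eta(a)=g_\chi\,v$ with $v\in A_\chi$, and $D_\chi=\operatorname{div}(g_\chi)$.
\end{itemize}
If this degree argument fails, the fallback is to work with $\gamma_c\gamma_{c_0}^{-1}$ and the multiplicative structure: products $A_\chi A_{-\chi}\subseteq A_0$, and on $A_0$ one has $\lambda$-twist zero, so there $\gamma_\eta$ is regular everywhere.

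\emph{Step 4: conclude.} With $b$ in hand, put $h=b^{-1}\gamma_\eta$. Then $\hat h_p\in\Aut_{R_p}$ for every $p$. Lemma~\ref{lem:projective-coefficients}, applied to $h(x)$ and to $h^{-1}(x)$ for each $x\in A$, shows that $h$ is a constant automorphism $h_0\in\Aut(A)$, so $\gamma_c=b(c)h_0$ wherever $b$ is regular. At the point $c_0$ with $\gamma_{c_0}=\id$ this gives $h_0=b(c_0)^{-1}\in T$. Hence $\gamma(C)\subseteq T$ on a dense open subset of $C$, and therefore everywhere, since $T$ is closed in $\Aut(A)$.
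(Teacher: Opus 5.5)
Your argument is correct, but it follows a different route from the paper's. The paper never builds a global element of $T(\k(C))$. For $v,w$ in a weight space $A_\chi$, it studies the auxiliary family $a\mapsto\gamma_a(v)\otimes\gamma_a^{-1}(w)$ in $A\otimes A$. Its Laurent expansion at each $p$ equals $h_p(v)\otimes h_p^{-1}(w)$, because the scalars $\chi(t_p)^{\pm1}$ cancel. The no-pole principle then makes this family constant, and evaluation at the identity gives $\gamma_a(v)\otimes\gamma_a^{-1}(w)=v\otimes w$. Hence every $\gamma_a$ is a homothety on each $A_\chi$, and it lies in $T$ by faithfulness.

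You exploit the same cancellation between $\gamma$ and $\gamma^{-1}$, but through degrees of divisors. Your degree argument does go through, so you can drop the ``fallback''. Taking all $f_i\ne0$, Step~2 gives $D_\chi\le\operatorname{div}(f_i)$ coefficientwise, hence $\deg D_\chi\le0$. The same reasoning for $\gamma_\eta^{-1}$ gives $\deg D_\chi\ge0$, since its local cocharacters are $-\lambda_p$ because $h_p$ commutes with $T(K_p)$. Thus $D_\chi=\operatorname{div}(f_i)$ is principal.

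Two points should be stated explicitly:
\begin{enumerate}
  \item This shows principality only for weights with $A_\chi\neq0$. It extends to a basis of $X^*(T)$ because $\chi\mapsto D_\chi$ is additive and these weights generate $X^*(T)$.
  \item $b$ is a unit at every point of $\widetilde C$ over $C$, since $\lambda_p=0$ there. So you may evaluate $\gamma_\eta=b\,h_0$ modulo $z$ at a point over $c_0$ even if $c_0$ is singular, and the closedness argument becomes unnecessary.
\end{enumerate}

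The paper's route is shorter: it needs neither the uniqueness of $\lambda_p$, nor the primitivity of $h_p(a)$, nor any principality statement. Yours gives an explicit global factorization $\gamma_\eta=b\,h_0$. In fact, once Step~3 gives $\gamma_\eta(a)=g_a v_a$, evaluating at $c_0$ already shows that every nonzero $a\in A_\chi$ is an eigenvector of every $\gamma_c$. That is exactly the paper's endgame, so your Step~4 could be shortened.
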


\begin{proof}
Write $A=\bigoplus_{\chi\in X^*(T)}A_\chi$ for the weight decomposition w.r.t. $T$.
Fix $\chi$ with $A_\chi\neq0$, choose $0\neq w\in A_\chi$, and let $v\in A_\chi$.
The map
\[
\Gamma\colon C\to B:= A\otimes_\k A,\quad a\mapsto\gamma_a(v)\otimes_\k\gamma_a^{-1}(w)
\] is algebraic.
Let $\Gamma_\eta\in B_{\k(C)}$ be its generic value.
At $p\in \widetilde C$, write $\hat\gamma_p=t_ph_p$ with $t_p\in T(K_p)$ and $h_p\in C_{R_p}(A,T)$.
The Laurent expansion $\hat\Gamma_p\in B_{K_p}$ is
\begin{align*}
  \hat\Gamma_p
  &=\hat\gamma_p(v)\otimes_{K_p}\hat\gamma_p^{-1}(w)
  =t_ph_p(v)\otimes_{K_p}h_p^{-1}t_p^{-1}(w)\\
  &=\chi(t_p)h_p(v)\otimes_{K_p}\chi(t_p)^{-1}h_p^{-1}(w)\\
  &=h_p(v)\otimes_{K_p}h_p^{-1}(w)\in A_{K_p}\otimes_{K_p}A_{K_p}=B_{K_p}.
\end{align*}
Since $h_p^{\pm1}$ preserves $A_{R_p}$, we have
$\hat\Gamma_p\in B_{R_p}$.
  Lemma~\ref{lem:projective-coefficients} gives $\Gamma_\eta\in B$, hence $\Gamma$ is constant.
Evaluating $\Gamma$ at the identity of $\Aut(X)$, we obtain
\begin{equation}\label{eq:balanced-tensor}
  \gamma_a(v)\otimes\gamma_a^{-1}(w)=v\otimes w
  \text{ for any } a\in C.
\end{equation}

Fix $a\in C$.
Taking $v=w$ in~\eqref{eq:balanced-tensor} and comparing the nonzero pure tensors gives
\[
  \gamma_a(w)=\lambda_\chi w,
  \qquad
  \gamma_a^{-1}(w)=\lambda_\chi^{-1}w
\]
for some $\lambda_\chi\in\k^*$.
For arbitrary $v\in A_\chi$, substituting the second equality into
\eqref{eq:balanced-tensor} gives
$\gamma_a(v)\otimes w=\lambda_\chi v\otimes w$.
Since $w\neq0$, this shows that
\[
  \gamma_a|_{A_\chi}=\lambda_\chi\id_{A_\chi}.
\]
Thus $\gamma_a$ acts by homotheties on every $T$-weight space and,
by faithfulness, belongs to $T$.
Since $a$ was arbitrary, $\gamma(C)\subseteq T$.
\end{proof}

\begin{remark}
In earlier drafts, Lemma~\ref{lem:torus-valued-curve} established only that
$\gamma(C)$ is pairwise commuting; the rigid and semirigid arguments then
invoked \cite[Theorem~B]{CRX23} and \cite[Theorem~A]{CKRS26-pr}, respectively.
The present proof gives $\gamma(C)\subseteq T$ directly.
\end{remark}

\section{The rigid case}\label{sec:rigid}

In this section we assume that $X$ is rigid.
By, e.g., \cite[Theorem~2.1]{ArGa17}, $\Aut(X)$ contains a unique maximal torus, say, $T$, and $T$ is normal in $\Aut(X)$.
Lemma~\ref{lem:connected-normalizer} implies that $T$ is central in $\Aut^\circ(X)$.

\begin{proposition}\label{prop:rigid-local}
Let $C\subseteq \Aut^\circ(X)$ be an irreducible algebraic curve.
For $p\in\widetilde C$, let $\hat{\gamma}_p$ be the Laurent expansion of the inclusion
$\gamma\colon C\hookrightarrow \Aut^\circ(X)$.  Then
\[
  \hat\gamma_p\in T(K_p)\,C_{R_p}(\cO(X),T).
\]
\end{proposition}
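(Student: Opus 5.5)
We reduce the statement to Lemma~\ref{lem:toral-correction} applied to $A=\cO(X)$ and $q=\hat\gamma_p$. Three hypotheses have to be checked: that $q$ centralizes $T$, that $\omega_q$ has finite pole order $m\le 1$, and that for $m=1$ the pole derivation lies in $\Lie T$.

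\textbf{Centralizing $T$.} Since $T$ is central in $\Aut^\circ(X)$, every $\gamma_c$ commutes with every $t\in T$. Fix $t$ and $x\in\cO(X)$. The identity $\gamma_c(t(x))=t(\gamma_c(x))$ is an identity between finitely many regular functions on $C$ with coefficients in $\cO(X)$, so it persists for the generic member $\gamma_\eta$. It then persists under scalar extension to $K_p$. Hence $\hat\gamma_p\in C_{K_p}(\cO(X),T)$.

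\textbf{Finite pole order.} Choose a local parameter $z$ at $p$ and write $q=\hat\gamma_p$. The operator $\omega_q=-q'q^{-1}$ is a $K_p$-linear derivation of $A_{K_p}$. Take finitely many algebra generators $a_1,\dots,a_k$ of $\cO(X)$. Each $q^{-1}(a_i)$ is a finite sum $\sum f_jx_j$ with Laurent coefficients. Hence $q'(q^{-1}(a_i))$ is also such a finite sum, and $\omega_q(a_i)\in z^{-M}A_{R_p}$ for some $M$ independent of $i$. By the Leibniz rule, $\omega_q(A_{R_p})\subseteq z^{-M}A_{R_p}$, so the pole order $m$ is finite.

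\textbf{Ruling out $m>1$.} Suppose $m>0$. Lemma~\ref{lem:pole} says the pole derivation $D$ is nonzero and locally finite. If $m>1$, then $D$ is a nonzero LND of $\cO(X)$, which contradicts rigidity. Hence $m\le1$.

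\textbf{The case $m=1$.} This is the main step. Here $D$ is semisimple with integral eigenvalues. We must show $D\in\Lie T$. By \cite[Proposition~4.3]{PeRe23}, every locally finite derivation of $\cO(X)$ is a sum of an element of $\Lie(T)$ and an LND. In the rigid case the LND part vanishes, so $D\in\Lie T$. Note that $T$ is the unique maximal torus, so this maximal torus is exactly the one in the statement.

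\textbf{Faithfulness.} $T\subseteq\Aut(X)$ acts faithfully on $A$ by definition.

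With all hypotheses verified, Lemma~\ref{lem:toral-correction} yields $\hat\gamma_p\in T(K_p)\,C_{R_p}(\cO(X),T)$.

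Besides invoking \cite{PeRe23}, the only delicate points are technical. The first is transferring the centralizing property to the generic point and then to $K_p$. The second is the finiteness of the pole order, which rests on finite generation of $\cO(X)$.
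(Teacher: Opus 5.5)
Your proposal is correct and follows the paper's proof essentially step by step: you bound the pole order of $\omega_{\hat\gamma_p}$ using finitely many generators, rule out $m>1$ by rigidity via Lemma~\ref{lem:pole}, place the residue $D$ in $\Lie T$ by \cite[Proposition~4.3]{PeRe23}, and conclude with Lemma~\ref{lem:toral-correction}; the paper handles $m=0$ directly via Lemma~\ref{lem:pole}(\ref{item:pole-regular}), which amounts to the same thing. Your explicit checks that $\hat\gamma_p$ centralizes $T$ (passing from the points of $C$ to the generic member and then to $K_p$) and that $T$ acts faithfully supply hypotheses the paper leaves implicit.
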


\begin{proof}
Choose a finite set of generators of $A:=\cO(X)$, apply $\omega:=\omega_{\hat{\gamma}_p}$, and take a common denominator for their images.
Then
\[
  \omega(A\otimes R_p)\subseteq z^{-M}(A\otimes R_p)
\]
for some $M\ge0$, and let $m$ be the least such exponent.
If $m=0$, Lemma~\ref{lem:pole}(\ref{item:pole-regular}) places $\hat\gamma_p$ in $C_{R_p}(A,T)\subseteq T(K_p)\,C_{R_p}(A,T)$, and the assertion follows.

Suppose that $m>0$, and let $D$ be the pole derivation of $\omega$.
By Lemma~\ref{lem:pole}(\ref{item:pole-nonzero})--(\ref{item:pole-higher}),
$D$ is a nonzero locally finite $\k$-derivation of $A$.
If $m>1$, then $D$ is locally nilpotent, contrary to the rigidity of $X$.
Thus $m=1$, and $D$ is semisimple with integral eigenvalues and belongs to $\Lie T$.
Lemma~\ref{lem:toral-correction}, applied to $\hat\gamma_p$, gives the assertion.
\end{proof}

\begin{proof}[Proof of Theorem~\ref{thm:rigid}]
Let $g\in \Aut^\circ(X)$.
Choose an irreducible algebraic curve $C\subseteq \Aut^\circ(X)$ through
$\id_X$ and $g$.
It centralizes $T$ by Lemma~\ref{lem:connected-normalizer}, and Proposition~\ref{prop:rigid-local} gives
$\hat\gamma_p\in T(K_p)C_{R_p}(\cO(X),T)$ at every $p\in\widetilde C$.
Lemma~\ref{lem:torus-valued-curve} gives $g\in C\subseteq T$.
Conversely, $T\subseteq\Aut^\circ(X)$ because $T$ is connected.
\end{proof}

\section{The semirigid case: the graded quotient}\label{sec:degree}

For the rest of the paper, assume that $X$ is semirigid; equivalently,
$\SAut(X)$ is abelian and nontrivial.
Put $A:=\cO(X)$, choose $0\neq\delta\in\LND(A)$, and set
\begin{equation}\label{eq:filtration}
  A_{-1}=0,\qquad A_n=\ker_\k\delta^{n+1}\text{ for } n\geq0,
  \qquad B:=\gr A=\bigoplus_{n\geq0}A_n/A_{n-1}.
\end{equation}

Local nilpotence makes filtration $(A_n)$ exhaustive, and the $\delta$-degree is
additive on products of nonzero elements \cite[Proposition~1.10]{Fre17}.
Thus the filtration is multiplicative and $B$ is a domain; see
\cite[Sections~1.1 and~2.1]{Alh15-pr}.
Every nonzero $\eta\in\LND(A)$ has the form $c\delta$ with
$c\in\Frac(\ker\delta)^\times$ \cite[Principle~12]{Fre17},
so $\eta^j=c^j\delta^j$.
Hence $(A_n)$ is independent of $\delta$,
$\eta(A_n)\subseteq A_{n-1}$ for every $\eta\in\LND(A)$,
and conjugation shows that every automorphism preserves it;
see also \cite[Propositions~3.2 and~3.5]{Alh15-pr}.

\begin{proposition}[The graded quotient]\label{prop:graded-quotient}
\leavevmode
\begin{enumerate}[(i)]
  \item Every automorphism of $A$ induces a graded automorphism of $B$, giving a homomorphism
\[
  \rho\colon\Aut(A)\longrightarrow\Aut_{\mathrm{gr}}(B)
\]
whose kernel is $\SAut(X)$.
  \item Every algebraic family of automorphisms of $A$ induces an algebraic family of graded automorphisms of $B$.
  \item If $\widebar T:=\rho(T)$, then $T\to\widebar T$ is an isomorphism, $\rho(\AAut(X))=\widebar T$,
and the action of $\widebar T$ on $B$ is faithful.
\end{enumerate}
\end{proposition}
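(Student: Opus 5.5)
Since the filtration $(A_n)$ is preserved by every automorphism, I would define $\rho(g)$ on $A_n/A_{n-1}$ by $\bar a\mapsto\overline{g(a)}$. Multiplicativity of the filtration makes this a graded algebra endomorphism, and functoriality $\rho(gh)=\rho(g)\rho(h)$ makes it an automorphism with inverse $\rho(g^{-1})$.

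For the kernel, I would first show $\SAut(X)\subseteq\ker\rho$. Each generator $\exp(t\eta)$ with $\eta\in\LND(A)$ satisfies $\eta(A_n)\subseteq A_{n-1}$, so $\exp(t\eta)(a)-a\in A_{n-1}$ for $a\in A_n$. For the reverse inclusion, let $g$ act trivially on $B$, so $(g-\id)(A_n)\subseteq A_{n-1}$ for all $n$. Then $g-\id$ is locally nilpotent, hence $g$ is unipotent and locally finite on each $A_n$ (which is finite-dimensional over $A_0$, though not over $\k$ in general). This is the delicate point, and I would argue as follows. $N:=\log g=\sum_{k\ge1}(-1)^{k+1}(g-\id)^k/k$ is a finite sum on each $A_n$, so it is a well-defined locally nilpotent derivation (the standard identity for logarithms of unipotent automorphisms of algebras). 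Then $g=\exp(N)$ lies in the $\Ga$-subgroup $\exp(\k N)\subseteq\SAut(X)$. I expect the main obstacle here to be justifying that $\log g$ is a derivation in this infinite-dimensional setting. I would handle it by filtering by the finite-dimensional $g$-stable subspaces, or by noting that $g$ is triangular over the exhaustive filtration and applying the usual formal identity $\log(g(x)g(y))$ degreewise.

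For (ii), an algebraic family $\gamma\colon C\to\Aut(A)$ gives a morphism $C\times X\to X$, and each $\gamma_c$ preserves every $A_n$. The family $\rho\gamma$ is determined on finitely many homogeneous generators $\bar x_i$ of $B$, if $B$ is finitely generated. If it is not, I would instead phrase algebraicity as: each $\rho(\gamma_c)(\bar x)=\overline{\gamma_c(x)}$ varies in a finite-dimensional subspace with coefficients regular in $c$. Writing $\gamma_c(x)=\sum f_i(c)x_i$ with $x_i\in A_n$ (possible since $\gamma_c(x)\in A_n$ for all $c$, so I can choose the basis inside $A_n$), the reduction gives $\sum f_i(c)\bar x_i$. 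This is exactly the Laurent-expansion-compatible notion used in Setup~\ref{setup:laurent-expansion}, so (ii) follows.

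For (iii), $T\cap\SAut(X)$ is trivial because $T\ltimes\SAut(X)$ is a semidirect product by \cite[Theorem~1.3]{PeRe23}. So $\rho|_T$ is injective, and it is a morphism of algebraic groups by (ii), hence an isomorphism onto its closed image $\widebar T$ (a bijective homomorphism of tori in characteristic zero is an isomorphism). Then $\rho(\AAut(X))=\rho(T\ltimes\SAut(X))=\widebar T$. Faithfulness of $\widebar T$ on $B$ is the statement $\ker\rho\cap T=\{1\}$, already shown.
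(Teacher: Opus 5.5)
Your proposal is correct and follows essentially the same route as the paper: $\rho$ is induced by the canonical filtration, $\ker\rho=\SAut(X)$ is obtained via $\log\varphi$ of the filtration-unipotent automorphism, algebraicity in (ii) comes from projecting regular coordinates in a finite-dimensional subspace of $A_n$ to $A_n/A_{n-1}$, and (iii) follows from $\AAut(X)=T\ltimes\SAut(X)$. The ``delicate point'' you flag is settled in the paper simply by citing \cite[Proposition~2.57(b)]{Fre17}. Alternatively, $t\mapsto\varphi^t=\sum_k\binom{t}{k}(\varphi-\id)^k$ is polynomial in $t$ on each element, so multiplicativity for $t\in\ZZ$ extends to all $t$, and differentiating at $t=0$ gives the Leibniz rule for $\log\varphi$. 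Your parenthetical about finiteness over $A_0$ is unnecessary, since local nilpotence of $g-\id$ already gives local finiteness.
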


\begin{proof}
The canonical filtration defines $\rho$.
Every LND lowers the filtration, so its exponential acts trivially on $B$.
Hence $\SAut(X)\subseteq\ker\rho$.

Let $\varphi\in\ker\rho$ and put $N=\varphi-\id_A$.
Since $N(A_n)\subseteq A_{n-1}$, the operator $N$ is locally nilpotent.
By \cite[Proposition~2.57(b)]{Fre17}, $D:=\log\varphi$ is an LND and
$\varphi=\exp D\in\SAut(X)$.
Thus $\ker\rho=\SAut(X)$.

For an algebraic family $g_c$ and $a\in A_n$, the values $g_c(a)$ lie
in a finite-dimensional subspace of $A_n$ and have regular coordinates.
Projection to $A_n/A_{n-1}$ preserves this property, as does the inverse
family.  Thus the induced family of graded automorphisms of $B$ is algebraic.

Finally, $\AAut(X)=T\ltimes\SAut(X)$ identifies its image with the
faithful torus $\widebar T\simeq T$.
\end{proof}

\begin{lemma}\label{lem:central-torus}
The torus $\widebar T$ is central in $\rho(\Aut^\circ(X))$.
\end{lemma}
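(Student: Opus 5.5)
We show that $\widebar T$ is normalized by $\rho(\Aut^\circ(X))$. Curve-connectedness of $\Aut^\circ(X)$ then lets us apply the argument of Lemma~\ref{lem:connected-normalizer}.

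\emph{Normalization.} Recall from Subsection~\ref{subsec:automorphism-subgroups} that $\AAut(X)$ is normal in $\Aut(X)$. For $g\in\Aut^\circ(X)$ this gives $gTg^{-1}\subseteq\AAut(X)$. Applying $\rho$ and using Proposition~\ref{prop:graded-quotient}(iii), we get
\[
  \rho(g)\,\widebar T\,\rho(g)^{-1}=\rho(gTg^{-1})\subseteq\rho(\AAut(X))=\widebar T .
\]
Replacing $g$ by $g^{-1}$ gives the reverse inclusion. Hence $\rho(\Aut^\circ(X))$ normalizes $\widebar T$.

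\emph{Centralization.} Take $g\in\Aut^\circ(X)$ and an irreducible curve $C\subseteq\Aut^\circ(X)$ through $\id$ and $g$; such a curve exists by curve-connectedness. By Proposition~\ref{prop:graded-quotient}(ii), $\rho|_C$ is an algebraic family of graded automorphisms of $B$. Each member normalizes $\widebar T$, so conjugation defines a map
\[
  C\longrightarrow\Aut_{\rm grp}(\widebar T),\qquad c\longmapsto\bigl(t\mapsto\rho(c)\,t\,\rho(c)^{-1}\bigr).
\]
We must check that this map is an algebraic family, so that it is constant because $\Aut_{\rm grp}(\widebar T)$ is discrete. The concrete argument is as follows.

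\begin{enumerate}[(1)]
  \item Each $\rho(c)$ normalizes $\widebar T$, so it permutes the $\widebar T$-isotypic components of $B$. In other words, it acts on the character lattice $X^*(\widebar T)$ through a lattice automorphism $\sigma_c$, with $\rho(c)(B_\chi)=B_{\sigma_c(\chi)}$.
  \item Fix a nonzero $w\in B_\chi$. The orbit $\rho(C)(w)$ spans a finite-dimensional subspace of $B$, so it involves only finitely many weights.
  \item The function $c\mapsto\rho(c)(w)$ is algebraic. Hence for each weight $\psi$, the set of $c$ with $\rho(c)(w)\in B_\psi$ is closed in $C$.
  \item These finitely many closed sets cover the irreducible curve $C$, so one of them equals $C$. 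Thus $\sigma_c(\chi)$ does not depend on $c$.
  \item The weights of the faithful action of $\widebar T$ on $B$ generate $X^*(\widebar T)$. Therefore $\sigma_c=\sigma_{\id}=\id$ for all $c\in C$.
\end{enumerate}

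It follows that every $\rho(c)$ commutes with $\widebar T$. In particular $\rho(g)$ centralizes $\widebar T$, as required.

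The main obstacle is making the discreteness argument of Lemma~\ref{lem:connected-normalizer} rigorous on $B$ rather than on $A$, since $\rho$ is not a priori a morphism of ind-groups. The weight-space argument in steps (1)--(5) avoids this: it uses only Proposition~\ref{prop:graded-quotient}(ii) and the faithfulness in Proposition~\ref{prop:graded-quotient}(iii).
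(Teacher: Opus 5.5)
Your proof is correct and follows the same route as the paper. You get normalization of $\widebar T$ from the normality of $\AAut(X)$ and $\rho(\AAut(X))=\widebar T$, then use discreteness of $\Aut_{\mathrm{grp}}(\widebar T)$ along an irreducible curve through $\id$. Your weight-space steps (1)--(5) spell out what the paper only asserts, namely that the conjugation family is algebraic and hence constant. Step (5) is not needed: once $\rho(c)$ preserves every weight space $B_\chi$, it commutes with $\widebar T$ directly.
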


\begin{proof}
Let $g\in\Aut^\circ(X)$ and choose an irreducible algebraic curve
$C\subseteq\Aut^\circ(X)$ containing $g$ and $\id$.
By Proposition~\ref{prop:graded-quotient}, since $C$ normalizes $\AAut(X)$,
$\rho(C)$ normalizes $\widebar T$ and conjugation gives an algebraic family of automorphisms of $\widebar T$.
As in the proof of Lemma~\ref{lem:connected-normalizer}, this family is
constant, hence trivial at every point since it is trivial at $\id$.
Thus $\rho(g)$ centralizes $\widebar T$.
\end{proof}

\section{The semirigid case: proof}\label{sec:proof-main}
We use the Laurent expansions of Setup~\ref{setup:laurent-expansion}
and the canonical graded action from Section~\ref{sec:degree}.
We start with an elementary lifting fact for the associated graded algebra.

\begin{lemma}\label{lem:lifting}
Let $D\in\Der_\k(A)$ preserve $(A_n)$, and let $\gr(D)$ be the induced degree-zero derivation of $B$.
If $\gr(D)$ is locally finite (resp. locally nilpotent), then $D$ is locally finite (resp. locally nilpotent).
\end{lemma}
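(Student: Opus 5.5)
Since $D$ preserves the filtration and every filtration piece is stable, it suffices to work on each $A_n$. The plan is to show that each $A_n$ is finite-dimensional, or at least that the relevant part is finite-dimensional. That is not automatic, however: $A_0=\ker\delta$ is typically infinite-dimensional. So we argue element by element, using induction on the filtration degree together with the finite-dimensionality supplied by local finiteness of $\gr(D)$.

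\emph{Locally finite case.} Fix $a\in A_n$. By hypothesis the class $\bar a\in A_n/A_{n-1}$ lies in a finite-dimensional $\gr(D)$-stable subspace $W\subseteq A_n/A_{n-1}$. Choose lifts $w_1,\dots,w_k\in A_n$ of a basis of $W$, with $a$ among the lifts (or $a$ a combination of them modulo $A_{n-1}$). Then $D(w_i)=\sum_j c_{ij}w_j+e_i$ with $e_i\in A_{n-1}$. By induction on $n$ (the base case $A_{-1}=0$ being trivial), each $e_i$, and also $a-\sum\lambda_iw_i\in A_{n-1}$, lies in a finite-dimensional $D$-stable subspace $U\subseteq A_{n-1}$. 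Then $\operatorname{span}(w_1,\dots,w_k)+U$ is a finite-dimensional $D$-stable subspace containing $a$. Hence $D$ is locally finite.

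\emph{Locally nilpotent case.} Again induct on $n$. For $a\in A_n$ there is $N$ with $\gr(D)^N\bar a=0$, so $D^N a\in A_{n-1}$. By the induction hypothesis, $D^{N'}(D^Na)=0$ for some $N'$. Hence $D$ is locally nilpotent.

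The only point requiring care is that $\gr(D)$ is well defined and degree-zero, which is exactly the assumption $D(A_n)\subseteq A_n$. Beyond that, the main obstacle is merely bookkeeping in the locally finite induction: one must make sure the correction terms $e_i$ and the difference $a-\sum\lambda_iw_i$ all land in a single finite-dimensional $D$-stable subspace of $A_{n-1}$. This holds because finite sums of finite-dimensional $D$-stable subspaces are again $D$-stable and finite-dimensional.
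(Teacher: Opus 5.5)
Your proof is correct and follows the paper's proof: an induction on the filtration degree, with the locally nilpotent case essentially identical. The only cosmetic difference is that in the locally finite case you build an explicit finite-dimensional $D$-stable subspace from lifts plus correction terms in $A_{n-1}$, whereas the paper runs the same induction with annihilating polynomials, taking $P(D)a\in A_{n-1}$ and then $Q(D)P(D)a=0$.
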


\begin{proof}
We use induction on $n$ for $a\in A_n$, starting with $A_{-1}=0$.
Local finiteness on $B$ gives a nonzero polynomial $P$ with
$P(D)a\in A_{n-1}$; induction gives a nonzero polynomial $Q$ with
$Q(D)P(D)a=0$.  Thus $D$ is locally finite.

If $\gr(D)$ is locally nilpotent, repeated application of $D$ eventually
lowers the filtration degree of each nonzero element.  Continuing in this
way gives zero, so $D$ is locally nilpotent.
\end{proof}

\begin{proposition}\label{prop:local-factor}
Let $C\subseteq \Aut^\circ(X)$ be an irreducible algebraic curve and fix $p\in\widetilde C$.
Let $\gamma\colon C\hookrightarrow\Aut^\circ(X)$ be the inclusion and
$\widebar\gamma:=\rho\circ\gamma\colon C\to\Aut(B)$ its image under the graded quotient.
Denote their Laurent expansions at $p$ by
\[
  \gamma_p\in\Aut_{K_p}(A\otimes_\k K_p),\qquad
  \widebar\gamma_p\in\Aut_{K_p}(B\otimes_\k K_p).
\]
Then
\[
  \widebar\gamma_p\in\widebar T(K_p)\,C_{R_p}(B,\widebar T).
\]
\end{proposition}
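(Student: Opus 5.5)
We mirror the proof of Proposition~\ref{prop:rigid-local}, with $B$ in place of $\cO(X)$ and $\widebar T$ in place of $T$. By Lemma~\ref{lem:central-torus}, $\widebar\gamma(C)$ centralizes $\widebar T$, so $\widebar\gamma_p$ centralizes $\widebar T$ as well. Moreover, $\widebar T$ acts faithfully on $B$ by Proposition~\ref{prop:graded-quotient}(iii).

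\emph{Step 1: finite pole order.} Since $B$ is finitely generated (it is the associated graded of a finitely generated filtered algebra with finite-dimensional pieces, or, if necessary, one works degreewise), choose finitely many generators of $B$. Take a common denominator of their images under $\bar\omega:=\omega_{\widebar\gamma_p}$ to obtain a finite pole order $m$. The operator $\bar\omega$ is homogeneous of degree zero, because $\widebar\gamma_p$ is graded. If $m=0$, Lemma~\ref{lem:pole}(\ref{item:pole-regular}) finishes the proof. If $m>0$, the pole derivation $\bar D$ is a nonzero, locally finite, degree-zero derivation of $B$, by Lemma~\ref{lem:pole}.

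\emph{Step 2: lift to $A$.} This is the main obstacle. We relate $\bar D$ to the pole derivation of $\omega:=\omega_{\gamma_p}$ on $A$. The derivation $\omega$ preserves the filtration $(A_n)\otimes K_p$, since $\gamma_p$ does (every automorphism preserves $(A_n)$, and so does the generic member). Moreover, $\gr(\omega)=\bar\omega$. However, the pole order $m_A$ of $\omega$ may exceed $m$. The plan is to show that $z^m\omega$ maps $A\otimes R_p$ into $A\otimes R_p$ modulo lower filtration terms, and then argue by induction on filtration degree. Alternatively, and more robustly, we avoid lifting the pole derivation and instead lift $\bar D$ directly. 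Concretely, $\bar D$ is a locally finite degree-zero derivation of $B$. We want a derivation $D$ of $A$ preserving $(A_n)$ with $\gr D=\bar D$. For this, take $D:=\overline{z^{m}\omega}$ restricted appropriately: on each $A_n$ (finite-dimensional over nothing in general, but finitely generated over the kernel), the leading $z^{-m}$ coefficient of $\omega$ modulo $A_{n-1}$ agrees with $\bar D$. The correction terms of order below $-m$ land in lower filtration. If this fails, we use the cleaner route of applying Lemma~\ref{lem:pole} degreewise, via the inductive structure of Lemma~\ref{lem:lifting}. Once a filtration-preserving lift $D$ exists, Lemma~\ref{lem:lifting} shows that $D$ is locally finite (respectively locally nilpotent if $\bar D$ is).

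\emph{Step 3: exclude higher poles and identify $\bar D$.} By \cite[Proposition~4.3]{PeRe23}, $D=\tau+\eta$ with $\tau\in\Lie T$ and $\eta$ an LND. Every LND lowers the filtration, so $\gr D=\gr\tau=d\rho(\tau)\in\Lie\widebar T$. Hence $\bar D\in\Lie\widebar T$ is semisimple. If $m>1$, then $\bar D$ would be locally nilpotent by Lemma~\ref{lem:pole}(\ref{item:pole-higher}), and a nonzero semisimple operator cannot be locally nilpotent. This contradicts Lemma~\ref{lem:pole}(\ref{item:pole-nonzero}). Thus $m=1$ and $\bar D\in\Lie\widebar T$, and Lemma~\ref{lem:toral-correction} applied to $\widebar\gamma_p$ on $B$ with the faithful torus $\widebar T$ gives $\widebar\gamma_p\in\widebar T(K_p)\,C_{R_p}(B,\widebar T)$.
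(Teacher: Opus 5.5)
Your outline matches the paper's proof. You lift the pole derivation $\widebar D$ of $\omega_{\widebar\gamma_p}$ to a filtration-preserving derivation of $A$, get local finiteness from Lemma~\ref{lem:lifting}, and decompose via \cite[Proposition~4.3]{PeRe23} to obtain $\widebar D\in\Lie\widebar T$. Then you exclude $m>1$ and conclude with Lemma~\ref{lem:toral-correction}; Step~3 is exactly what the paper does. Steps~1 and~2, however, need repair, and the paper fixes both with one observation.

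In Step~1, you should not appeal to finite generation of $B$. The pieces $A_n$ are infinite-dimensional as soon as $\dim X\geq2$, since $A_0=\ker\delta$ has transcendence degree $\dim X-1$. Moreover, $B_0=\ker\delta\cong B/\bigoplus_{n>0}B_n$, so finite generation of $B$ would force finite generation of $\ker\delta$, which you cannot assume: kernels of LNDs need not be finitely generated.

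In Step~2, $\overline{z^m\omega}$ is undefined on $A\otimes R_p$ when the pole order of $\omega=\omega_{\gamma_p}$ on $A$ exceeds $m$. The hedged fallbacks (induction on degree, ``working degreewise'') are also unnecessary. Instead, take $M$ from finitely many generators of $A$ and expand $\omega_{\gamma_p}(a)=\sum_{j\ge-M}z^jD_j(a)$ for $a\in A$. Three facts follow:
\begin{enumerate}[(a)]
  \item Comparing coefficients of $z^j$ in the Leibniz rule shows that each $D_j$ is a $\k$-derivation of $A$. You need this for \cite[Proposition~4.3]{PeRe23}, and you never verified it.
  \item Since $\gamma_p^{\pm1}$ and $\partial_z$ preserve $A_n\otimes K_p$, each $D_j$ preserves $(A_n)$.
  \item Coefficientwise, $\omega_{\widebar\gamma_p}=\sum_{j\ge-M}z^j\gr(D_j)$.
\end{enumerate}
This gives, at once, the finite pole order on $B$ (at most $M$) and the lift $D:=D_{-m}$ with $\gr(D_{-m})=\widebar D$. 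The terms with $j<-m$ simply satisfy $\gr(D_j)=0$. This $z^{-m}$-coefficient is the object you gesture at as ``the leading $z^{-m}$ coefficient''. Stated this way, no further argument is needed, and it is exactly the construction the paper uses.
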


\begin{proof}
For $a\in A$, let $D_j(a)$ be the coefficient of $z^j$ in $\omega_{\gamma_p}(a)$.
Since $\omega_{\gamma_p}$ is a $K_p$-linear derivation, comparing coefficients of $z^j$ in
\[
  \omega_{\gamma_p}(ab)=\omega_{\gamma_p}(a)b+a\omega_{\gamma_p}(b)
  \qquad(a,b\in A)
\]
gives $D_j(ab)=D_j(a)b+aD_j(b)$, so each $D_j$ is a $\k$-derivation of $A$.
Laurent expansion and differentiation preserve the filtration and commute
with its quotients.  Hence each $D_j$ preserves the filtration, and
\begin{equation}\label{eq:coefficient-graded}
  \omega_{\gamma_p}=\sum_{j\geq-M}z^jD_j,
  \qquad
  \omega_{\widebar\gamma_p}=\sum_{j\geq-M}z^j\gr(D_j),
\end{equation}
The bound $M$ is obtained by taking a common denominator of a finite set of generators, as in the proof of Proposition~\ref{prop:rigid-local}.
In particular, $\omega_{\widebar\gamma_p}$ has finite pole order.
By Lemma~\ref{lem:central-torus}, $\widebar\gamma_p$ centralizes $\widebar T$.

Let $m$ be the pole order of $\omega_{\widebar\gamma_p}$.
If $m=0$, Lemma~\ref{lem:toral-correction} gives the assertion. 
Suppose $m>0$ and put $\widebar D:=\gr(D_{-m})\neq0$.
By Lemma~\ref{lem:pole}, $\widebar D$ is locally finite; hence Lemma~\ref{lem:lifting}
implies that $D_{-m}$ is locally finite.
By \cite[Proposition~4.3]{PeRe23}, recalled in Subsection~\ref{subsec:automorphism-subgroups}, we can write
$D_{-m}=\delta_s+\delta_n$ with $\delta_s\in\Lie(T)$ and $\delta_n\in\LND(A)$.
Here the summands need not commute.
Since $\delta_n$ lowers the filtration degree, $\gr(\delta_n)=0$, so
$\widebar D=\gr(\delta_s)\in\Lie(\widebar T)$ and $\widebar D$ is semisimple.
If $m>1$, Lemma~\ref{lem:pole}(\ref{item:pole-higher}) also makes $\widebar D$
locally nilpotent, contradicting $\widebar D\neq0$.
Thus $m=1$, and the residue $\widebar D\in\Lie(\widebar T)$ has integer eigenvalues by Lemma~\ref{lem:pole}(\ref{item:pole-simple}).
Lemma~\ref{lem:toral-correction} now gives the assertion.
\end{proof}

\begin{proof}[Proof of Theorem~\ref{thm:main}]
Let $g\in\Aut^\circ(X)$ and choose an irreducible algebraic curve
$C\subseteq\Aut^\circ(X)$ containing $g$ and the identity.
By Lemma~\ref{lem:central-torus}, $\rho(C)$ centralizes $\widebar T$.
Proposition~\ref{prop:local-factor} gives
$\widebar\gamma_p\in\widebar T(K_p)C_{R_p}(B,\widebar T)$ at every $p\in\widetilde C$.
Thus Lemma~\ref{lem:torus-valued-curve}, applied to $\rho|_C$, gives
$\rho(C)\subseteq\widebar T$.
Proposition~\ref{prop:graded-quotient} gives
$\rho^{-1}(\widebar T)=T\cdot\ker\rho=\AAut(X)$, so $g\in\AAut(X)$.
Since $g$ was arbitrary, $\Aut^\circ(X)\subseteq\AAut(X)$.
The reverse inclusion follows from connectedness of $\Ga$ and $\Gm$.
Finally, \cite[Theorem~1.3]{PeRe23} shows that this group is nested and that
\[
  \Aut^\circ(X)=\AAut(X)=T\ltimes\SAut(X).\qedhere
\]

\end{proof}

\printbibliography

\end{document}